\documentclass[11pt,a4paper]{article}

\usepackage{amsmath,amssymb,latexsym}
\usepackage{mathtools}
\usepackage{mathrsfs}
\usepackage{graphicx}
\usepackage{epsfig}
\usepackage{longtable}
\usepackage{amsthm}
\usepackage{xcolor}
\usepackage{hyperref}
\usepackage{tikz}
\usepackage[all]{xy}
\usepackage{booktabs}

\usepackage{lineno}

\newtheorem{theorem}{Theorem}
\newtheorem{lemma}[theorem]{Lemma}
\newtheorem{corollary}[theorem]{Corollary}
\newtheorem{proposition}[theorem]{Proposition}
\newtheorem{definition}{Definition}

\newtheorem{remark}{Remark}

\newcommand{\calC}{\mathcal{C}}
\newcommand{\IQ}{[0,1]^2}
\newcommand{\mup}{\mu_p}
\newcommand{\muinf}{\mu_\infty}
\newcommand{\sigp}{\sigma_p}

\newcommand{\abs}[1]{\left|#1\right|}

\begin{document}

\title{ \Large\bf Quantitative analysis of non-exchangeability in bivariate copulas: Sharp bounds, statistical tests and mixing constructions}
\author{Manuel Úbeda-Flores$^{\rm }$
\\
\small{$^{\rm }$Department of Mathematics, University of Almería, 04120 Almería, Spain}\\
\small{\texttt{mubeda@ual.es}}\\
}
\maketitle


\begin{abstract}
This paper studies the degree to which a bivariate copula fails to be symmetric under coordinate permutation, a property known as
non-exchangeability. Working within an axiomatic framework that quantifies this asymmetry through a family of $L^p$-based measures, we establish sharp bounds linking non-exchangeability to classical dependence and concordance measures, prove exact scaling laws under convex mixing that enable explicit construction of copulas with any prescribed degree of asymmetry, and characterise the class of maximally non-exchangeable copulas together with the feasible range of asymmetry--concordance pairs. On the inferential side, we propose a nonparametric permutation test for exchangeability with exact
finite-sample error control and consistency against all asymmetric alternatives, validated by Monte Carlo simulation and illustrated
on a real data set.
\end{abstract}
\bigskip

\noindent MSC2020: 60E05, 62H20.\medskip

\noindent {\it Keywords}: copula; non-exchangeability; measures of association; measures of asymmetry.

\section{Introduction}

Consider a pair of real-valued random variables $(X, Y)$ defined on a common probability space. Let $F_X$ and $F_Y$ denote their respective marginal distribution functions (d.f.'s), and let $F_{X,Y}$ and $F_{Y,X}$ represent the joint d.f.'s of the vectors $(X, Y)$ and $(Y, X)$. Following \cite{Chow78}, the random vector $(X, Y)$ is classified as \emph{exchangeable} provided that $F_X = F_Y$ and $F_{X,Y} = F_{Y,X}$.

As pointed out by Nelsen \cite{Nel07}, while the concept of exchangeability has been extensively explored in the statistical literature, there has been a notable lack of research regarding the specific mechanisms through which identically distributed variables deviate from this property, as well as the underlying dependence structures that arise in such cases. However, several studies \cite{AP07,Beliakov2014,Dur09,Erd06,KlMes06,Kokol2020,Papini2015}  investigate the non-exchangeability of bivariate distributions, particularly focusing on their implementation within various statistical frameworks.

Inspired by these developments, the present work aims to provide an axiomatic foundation for quantifying the degree of non-exchangeability in bivariate vectors composed of continuous and identically distributed random variables. Special emphasis is placed on the significance and application of these measures within the context of copula theory.

The paper is organised as follows. Section~\ref{sec:prel} reviews the necessary
background on copulas and the axiomatic construction of non-exchangeability measures. Sections~\ref{sec:bounds} and~\ref{sec:monoton} establish sharp bounds relating non-exchangeability to classical dependence and concordance measures, and study how the measures vary with the integrability parameter. Section~\ref{sec:M1} examines the class of maximally non-exchangeable copulas and characterises the pairs of asymmetry and concordance that can be simultaneously achieved. Section~\ref{sec:escalado} proves exact scaling laws for the non-exchangeability measures under convex mixing, enabling explicit prescription of any target degree of asymmetry. Section~\ref{sec:test} develops the permutation test, its asymptotic theory, and its comparison with existing procedures. Section~\ref{sec:simulation} presents the Monte
Carlo study. Finally, Section~\ref{sec:applications} is devoted to applications and Section~\ref{sec:conc} collects the conclusions and open problems.

\section{Preliminaries}\label{sec:prel}

In order to establish a rigorous framework for our analysis, this section provides the necessary mathematical background that underpins the rest of this study. We begin by reviewing the fundamental properties of copulas, which serve as the essential tools for modeling the dependence structure of multivariate distributions independently of their marginals. Building upon these definitions, we recall the axiomatic construction of non-exchangeability measures.

\subsection{Basic definitions and properties of copulas}

We first review several fundamental concepts regarding copulas. For a complete survey, see \cite{Durante2016,Nel06}.

Formally, a (bivariate) \emph{copula} is a function $C: [0, 1]^2 \to [0, 1]$ that satisfies the following two conditions:
\begin{itemize}
\item[(C1)] \emph{Boundary constraints}: For every $t \in [0, 1]$, $C(t, 0) = C(0, t) = 0$ and $C(t, 1) = C(1, t) = t$.
\item[(C2)] \emph{The $2$-increasing property}: For any $u, v, u', v' \in [0, 1]$ such that $u \le u'$ and $v \le v'$, the $C$-{\it volume} of the rectangle $[u, u'] \times [v, v']$ is non-negative, i.e.,
\begin{equation*}
C(u', v') - C(u, v') - C(u', v) + C(u, v) \geq 0.
\end{equation*}
\end{itemize}

According to \emph{Sklar's theorem} \cite{Sk59}, for any pair of continuous random variables $(X, Y)$, their joint d.f. $F_{X,Y}$ can be uniquely represented for all $x, y \in [-\infty,+\infty]$ as
\begin{equation*}\label{E:sklar_new}
F_{X,Y}(x, y) = C(F_X(x), F_Y(y)),
\end{equation*}
where $C$ is a copula (for a complete proof of this result, see \cite{Ubeda2017}). Essentially, $C$ corresponds to a bivariate distribution function on $[0, 1]^2$ with uniform marginals.

For any copula $C \in \mathcal{C}$ ---where $\mathcal{C}$ denotes the set of all copulas---, the following inequality holds:
$W(u, v) \le C(u, v) \le M(u, v)$ for all $(u,v)\in[0,1]^2$, where $W(u, v) = \max(u+v-1, 0)$ and $M(u, v) = \min(u, v)$ are the \emph{Fr\'echet--Hoeffding lower and upper bounds}, representing perfect negative and positive dependence (countermonotonicity and comonotonicity), respectively. Furthermore, the product copula $\Pi(u, v) = uv$ characterizes the case where $X$ and $Y$ are independent.

The exchangeability of a random pair can be elegantly characterized through its associated copula as follows: Let $(X, Y)$ be a vector of continuous random variables with copula $C$. Then $X$ and $Y$ are exchangeable if, and only if, they are identically distributed ($F_X = F_Y$) and $C$ is \emph{symmetric}, i.e., $C(u, v) = C(v, u)$ for all $(u, v) \in [0, 1]^2$.


For any $C \in \mathcal{C}$, we define its transpose as $C^t(u, v) = C(v, u)$; and, additionally, the \emph{survival copula} $\hat{C}$ is given by $\hat{C}(u, v) = u + v - 1 + C(1-u, 1-v)$ for all $(u,v)\in[0,1]^2$. The following properties describe how transformations of the random variables affect the underlying copula: Let $X$ and $Y$ be continuous random variables with marginals $F_X, F_Y$ and copula $C$. Then:
\begin{enumerate}
\item[(a)] The copula representing the vector $(Y, X)$ is the transpose copula $C^t$.
\item[(b)] If $g: \mathbb{R} \to \mathbb{R}$ is a strictly increasing transformation, the copula of $(g(X), g(Y))$ remains $C$.
\item[(c)] If $g: \mathbb{R} \to \mathbb{R}$ is a strictly decreasing transformation, the copula of $(g(X), g(Y))$ becomes the survival copula $\hat{C}$.
\end{enumerate}

\subsection{Measures of non-exchangeability}

The \emph{degree of non-exchangeability} of a copula $C$ is quantified by
\[
  \mup(C) \;:=\; d_p(C,C^t), \qquad p\in[1,+\infty],
\]
where $d_p$ is the $L^p$ distance on $\calC$.
Explicitly, for $p<\infty$,
\[
  \mup(C)
  =\left(\int_{\IQ}\bigl|C(u,v)-C(v,u)\bigr|^p\,du\,dv\right)^{1/p},
\]
while $$\muinf(C)=\sup_{(u,v)\in\IQ}|C(u,v)-C(v,u)|.$$
It is proved in~\cite{Durante2010} that $\mup$ satisfies a natural set of axioms (boundedness; nullity if, and only if, $C$ is symmetric;
invariance under transposition and under strictly monotone transformations;
and continuity with respect to weak convergence), and that the maximum
value is $K_{\mu_\infty}=\tfrac{1}{3}$ and
$$K_{\mu_p}=\left(\frac{2\cdot 3^{-p}}{(p+1)(p+2)}\right)^{1/p}\quad\text{for}\,\, p<\infty.$$
The measure $\mu_p$ takes values in $[0, K_{\mu_p}]$. One could normalise $\mu_p$ by dividing by $K_{\mu_p}$ to obtain a measure in $[0,1]$; we do not adopt this convention here, and all subsequent results use the unnormalised form.

\section{Upper bounds for \texorpdfstring{$\mu_p$}{} in terms of known measures}\label{sec:bounds}

One of the most suggestive observations in \cite [Remark~1]{Durante2010} is that the non-exchangeability measure $\mu_p(C)$ is bounded above by the Schweizer and Wolff dependence measure $\sigma_p(C)$, given by
$$\sigma_p(C):=c_p\cdot d_p(C,\Pi)=\left(\frac{(2p+2)!}{2\,(p!)^2}\cdot\int_{\IQ}\bigl|C(u,v)-uv\bigr|^p\,{\rm d}u\,{\rm d}v\right)^{1/p}$$
for $p\in[1,\infty)$ and 
$$\sigma_{\infty}(C)
  :=c_\infty\cdot d_\infty(C,\Pi)=
  4\sup_{(u,v)\,\in\,\IQ}\abs{C(u,v)-uv}$$
(see \cite{Sch1981}), although the optimal constant was not determined, nor was the result extended to other classical measures. The objective of this section is precisely to address this gap and to establish explicit and optimal upper bounds for $\mu_p(C)$ as a function of Spearman's $\rho$, given by
\begin{align*}
\rho(C) &= 12 \int_{[0,1]^2} C(u,v) \, {\rm d}u\, {\rm d}v - 3
\end{align*}
(see \cite{Nel06}). The key insight is that the (concordance) measures quantify the global proximity of $C$ to the extreme copulas $M$ and $W$, whereas $\mu_p(C)$ measures the asymmetry of $C$ with respect to the diagonal. These two notions are not independent: a copula close to $M$ or $W$ has little margin to be asymmetric, as these copulas are themselves symmetric. Precisely formalizing this intuitive principle is the common thread of this section.

\subsection{Bound via the Schweizer and Wolff dependence measure}
\label{subsec:bound_sigma}

We begin by providing a precise formulation ---with an optimal constant--- of the bound mentioned without detail in the reference article. The primary tool is the triangle inequality in the metric space $(\mathcal{C}, d_p)$.

\begin{proposition}\label{prop:bound}
  Let $C\in\calC$. Then
  \begin{equation}\label{eq:bound}
    \mup(C) \;\le\; \frac{2}{c_p}\,\sigp(C),
  \end{equation}
for all $p\in[1,+\infty]$. Moreover, the constant $2/c_p$ is sharp for every $p\in[1,+\infty]$.
\end{proposition}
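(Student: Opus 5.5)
The inequality follows from the triangle inequality in the metric space $(\calC,d_p)$, using the independence copula $\Pi$ as the intermediate point. For every $p\in[1,+\infty]$,
\[
\mup(C)=d_p(C,C^t)\le d_p(C,\Pi)+d_p(\Pi,C^t).
\]
The key observation is that $d_p(\Pi,C^t)=d_p(C,\Pi)$. Indeed, $\Pi$ is symmetric, so $\Pi-C^t=(\Pi-C)\circ\tau$, where $\tau(u,v)=(v,u)$. The map $\tau$ preserves Lebesgue measure on $\IQ$ (and trivially preserves suprema), so the $L^p$ norm is unchanged. Hence $\mup(C)\le 2\,d_p(C,\Pi)$. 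Since $\sigp(C)=c_p\,d_p(C,\Pi)$ by definition, this gives exactly \eqref{eq:bound}.

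For sharpness, I will exhibit, for every $p$ simultaneously, a non-symmetric copula attaining equality. Equality in the triangle inequality above holds whenever $C^t-\Pi=-(C-\Pi)$, i.e. when $C-\Pi$ is antisymmetric. In that case $C-C^t=2(C-\Pi)$ pointwise, and so $\mup(C)=2\,d_p(C,\Pi)=\frac{2}{c_p}\sigp(C)$ for all $p\in[1,+\infty]$ at once.

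So I take
\[
C_\theta(u,v)=uv+\theta\, f(u,v),\qquad f(u,v)=uv(1-u)(1-v)(u-v),
\]
with $\theta\neq 0$ small. The polynomial $f$ is antisymmetric, $f(u,v)=-f(v,u)$, and it vanishes on the boundary of $\IQ$, so (C1) holds. For (C2), it suffices that the density $1+\theta\,\partial^2 f/\partial u\partial v$ is nonnegative on $\IQ$. Since $\partial^2 f/\partial u\partial v$ is a polynomial, it is bounded on the compact square, say by $L$. Hence any $0<|\theta|\le 1/L$ yields a copula. Moreover $C_\theta$ is not symmetric, because $f\not\equiv 0$. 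Therefore $C_\theta$ attains equality in \eqref{eq:bound} with $\mup(C_\theta)>0$, so the constant $2/c_p$ cannot be decreased for any $p$.

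The only step needing care is the verification of the 2-increasing property. This is routine once one bounds the mixed partial of $f$, and one could even compute $L$ explicitly if a concrete admissible $\theta$ is desired.
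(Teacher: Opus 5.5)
Your proof is correct and follows the paper's route: the triangle inequality through $\Pi$, using that transposition is a $d_p$-isometry fixing $\Pi$, and sharpness via a non-symmetric copula for which $C-\Pi$ is antisymmetric, so that equality holds for all $p$ at once. The only difference is that you take the explicit antisymmetric perturbation $\theta\,uv(1-u)(1-v)(u-v)$ of $\Pi$, whereas the paper uses $\tfrac{\varepsilon}{2}\bigl(B-B^t\bigr)$ for an arbitrary copula $B$ with bounded density and $B\neq B^t$; your copula is one instance of that family.
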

 
\begin{proof}
The change of variables $(u,v)\mapsto(v,u)$ preserves the Lebesgue measure on $\IQ$ and the supremum over $\IQ$, so the map $C\mapsto C^t$
is an isometry of $(\calC,d_p)$ for every $p\in[1,+\infty]$. In particular, $\Pi^t=\Pi$ gives $d_p(\Pi,C^t)=d_p(\Pi,C)$, and the triangle inequality in $(\calC,d_p)$ yields
\[
  \mup(C)
  = d_p(C,C^t)
  \;\le\;
  d_p(C,\Pi)+d_p(\Pi,C^t)
  = 2\,d_p(C,\Pi)
  = \frac{2}{c_p}\,\sigp(C).
\]

Now, let $B \in \mathcal{C}$ be any absolutely continuous copula with bounded density $b = \frac{\partial^2}{\partial u\,\partial v}B$ and
$B \neq B^t$, and define, for $\varepsilon > 0$\,:
\begin{equation}\label{eq:Ceps}
  C_\varepsilon(u,v) := uv + \frac{\varepsilon}{2}[B(u,v) - B(v,u)].
\end{equation}
Since $b$ is bounded, $C_\varepsilon$ has density
$$1 + \frac{\varepsilon}{2}[b(u,v) - b(v,u)]\ge 0$$ for all
$\varepsilon \le 2/\|b - b^t\|_\infty$, so $C_\varepsilon$ is a copula. Such copulas $B$ are dense in $(\mathcal{C}, d_p)$, so the sharpness
extends to all of $\mathcal{C}$.
 
By construction, $C_\varepsilon - \Pi = (\varepsilon/2)(B-B^t)$ is antisymmetric, so $C_\varepsilon(u,v)+C_\varepsilon(v,u)=2uv$
for all $(u,v)\in[0,1]^2$. Therefore
$C_\varepsilon(u,v)-C_\varepsilon(v,u) = \varepsilon(B(u,v)-B(v,u))$,
and
\begin{align}
  \mup(C_\varepsilon)
  &= d_p(C_\varepsilon,\,C_\varepsilon^t)
   = \varepsilon\,d_p(B,B^t)
   = \varepsilon\,\mup(B),
  \label{eq:mu_eps}\\[4pt]
  d_p(C_\varepsilon,\Pi)
  &= \frac{\varepsilon}{2}\,d_p(B,B^t)
   = \frac{\varepsilon}{2}\,\mup(B),
  \label{eq:d_eps}
\end{align}
where~\eqref{eq:d_eps} uses the fact that $C_\varepsilon-\Pi$
is antisymmetric and hence
$d_p(C_\varepsilon,\Pi)^p = (\varepsilon/2)^p\,d_p(B,B^t)^p$.
From~\eqref{eq:mu_eps} and~\eqref{eq:d_eps},
\[
  \frac{\mup(C_\varepsilon)}{\sigp(C_\varepsilon)}
  = \frac{\varepsilon\,\mup(B)}{c_p\cdot(\varepsilon/2)\,\mup(B)}
  = \frac{2}{c_p}
\]
for every $\varepsilon>0$. The bound~\eqref{eq:bound} is therefore sharp.
\end{proof}
 
\begin{remark}
The copula $C_\varepsilon$ defined in~\eqref{eq:Ceps} satisfies
$\mup(C_\varepsilon)/\sigp(C_\varepsilon)=2/c_p$ \emph{exactly},
for every $\varepsilon>0$ and every $B\ne B^t$.
The key property that makes the triangle inequality an equality in $d_p$
is that $C_\varepsilon-\Pi$ is antisymmetric, which forces $\Pi$ to lie
exactly on the $d_p$-geodesic between
$C_\varepsilon$ and $C_\varepsilon^t$.
\end{remark}

\subsection{Bound via Spearman's \texorpdfstring{$\rho$}{}}
\label{subsec:bound_rho}

Spearman's $\rho$ is related to $\sigma_1$. This allows the previous bound to be converted into a direct expression in terms of $\rho$, with the additional advantage that $\rho$ is easily estimable from data.

\begin{proposition}\label{teo32}
For every $C\in\calC$,
\begin{equation}\label{eq:chain}
  \mu_1(C)
  \;\le\;
  \frac{\sigma_1(C)}{6}
  \;\le\;
  \frac{2-\abs{\rho(C)}}{6}.
\end{equation}
\end{proposition}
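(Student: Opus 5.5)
The first inequality in \eqref{eq:chain} is the case $p=1$ of Proposition~\ref{prop:bound}. For $p=1$ the normalising constant is $c_1=\frac{4!}{2\,(1!)^2}=12$, so $2/c_1=1/6$ and \eqref{eq:bound} reads $\mu_1(C)\le \sigma_1(C)/6$.

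The second inequality is equivalent to $\sigma_1(C)=12\int_{\IQ}|C-\Pi|\le 2-|\rho(C)|$. I will use three elementary integrals:
\[
12\int_{\IQ}(C-\Pi)=\rho(C),\qquad 12\int_{\IQ}(M-C)=1-\rho(C),\qquad 12\int_{\IQ}(C-W)=1+\rho(C).
\]
The last two follow from $\rho(M)=1$ and $\rho(W)=-1$, i.e.\ $12\int M=4$ and $12\int W=2$. I then split into two cases according to the sign of $\rho(C)$.

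\emph{Case $\rho(C)\ge 0$.} Write $|x|=x+2x^-$ with $x^-=\max(-x,0)$, applied pointwise to $x=C-\Pi$. This gives
\[
\sigma_1(C)=\rho(C)+24\int_{\IQ}(\Pi-C)^+.
\]
Since $M\ge \Pi$ and $M\ge C$, we have pointwise $0\le(\Pi-C)^+\le M-C$. Hence $24\int(\Pi-C)^+\le 2(1-\rho(C))$, and so $\sigma_1(C)\le 2-\rho(C)=2-|\rho(C)|$.

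\emph{Case $\rho(C)<0$.} Here I use $|x|=-x+2x^+$, which gives
\[
\sigma_1(C)=-\rho(C)+24\int_{\IQ}(C-\Pi)^+.
\]
Since $W\le\Pi$ and $W\le C$, we have pointwise $(C-\Pi)^+\le C-W$. Hence $24\int(C-\Pi)^+\le 2(1+\rho(C))$, and so $\sigma_1(C)\le 2+\rho(C)=2-|\rho(C)|$.

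The only step needing care is the choice of decomposition of $|C-\Pi|$. The signed part must be absorbed by $\rho$, and the one-sided excess must be dominated by the distance to the Fréchet--Hoeffding bound on the same side as $\rho$. Once that is chosen, both cases reduce to the pointwise inequalities $W\le C,\Pi\le M$ and the three integrals above.
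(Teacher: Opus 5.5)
Your proof is correct and is essentially the paper's argument: both split $C-\Pi$ into positive and negative parts and dominate one of them using the Fr\'echet--Hoeffding bounds. Your pointwise bound $(\Pi-C)^+\le M-C$ is, after rearranging, exactly the paper's $(C-\Pi)^+\le M-\Pi$, since $M-C-(\Pi-C)^+=M-\Pi-(C-\Pi)^+$; the other pair of bounds corresponds in the same way. The one difference is cosmetic: your two bounds $\sigma_1(C)\le 2-\rho(C)$ and $\sigma_1(C)\le 2+\rho(C)$ hold for every $C$. So the case split on the sign of $\rho(C)$ can be replaced by taking their minimum, as the paper does.
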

 
\begin{proof}
The left inequality follows from Proposition \ref{prop:bound} with $p=1$.

For the second inequality, write $f(u,v)=C(u,v)-uv$ and split the integral by sign:
\begin{align*}
  \frac{\sigma_1(C)}{12}
  &=\int_{\{f(u,v)\ge 0\}}f(u,v)\,{\rm d}u\,{\rm d}v+\int_{\{f(u,v)<0\}}\abs{f(u,v)}\,{\rm d}u\,{\rm d}v,\\
  \frac{\rho(C)}{12}
  &=\int_{\{f(u,v)\ge 0\}}f(u,v)\,{\rm d}u\,{\rm d}v-\int_{\{f(u,v)<0\}}\abs{f(u,v)}\,{\rm d}u\,{\rm d}v.
\end{align*}
Adding and subtracting, the positive part of $f$ satisfies
$$\int_{\{f(u,v)\ge 0\}}f(u,v)\,{\rm d}u\,{\rm d}v=\frac{\sigma_1(C)+\rho(C)}{24}$$
and is bounded above by $$\int_{\IQ}[M(u,v)-\Pi(u,v)]\,{\rm d}u\,{\rm d}v=\frac{1}{12}.$$ Similarly, the negative part satisfies
$$\int_{\{f(u,v)<0\}}\abs{f(u,v)}\,{\rm d}u\,{\rm d}v=\frac{\sigma_1(C)-\rho(C)}{24}$$
and is bounded above by $$\int_{\IQ}[\Pi(u,v)-W(u,v)]\,{\rm d}u\,{\rm d}v=\frac{1}{12}.$$
Hence:
\[
  \frac{\sigma_1(C)+\rho(C)}{24}\le\frac{1}{12},
  \qquad
  \frac{\sigma_1(C)-\rho(C)}{24}\le\frac{1}{12},
\]
which give $\sigma_1(C)\le 2-\rho(C)$ and $\sigma_1(C)\le 2+\rho(C)$
respectively. Combining:
\[
  \sigma_1(C)\le\min\{2-\rho(C),\,2+\rho(C)\}=2-\abs{\rho(C)},
\]
whence the result follows.
\end{proof}

\begin{remark}
Proposition~\ref{teo32} admits a twofold interpretation.

(i) The closer $|\rho(C)|$ is to $1$, the smaller the upper bound $(2-|\rho(C)|)/6$ on $\mu_1(C)$ becomes. In the extreme case $|\rho(C)|=1$, the bound yields $\mu_1(C) \le 1/6$, which is
not tight enough to force $\mu_1(C)=0$ directly. However, one can argue independently: $\rho(C)=1$ implies $C=M$ and $\rho(C)=-1$ implies $C=W$; both $M(u,v)=\min(u,v)$ and $W(u,v)=\max(u+v-1,0)$ are symmetric ($M=M^t$, $W=W^t$), so $\mu_1(C)=0$ follows directly from the definition $\mu_1(C)=d_1(C,C^t)=0$.

(ii) The bound
\[
    \mu_1(C) \;\le\; \frac{2-|\rho(C)|}{6}
\]
provides a necessary condition on the pair $(\mu_1(C),\rho(C))$ that every copula must satisfy. In practice, if empirical estimates
$\hat{\mu}_n$ and $\hat{\rho}_n$ violate this inequality, i.e.,
\[
    \hat{\mu}_n \;>\; \frac{2-|\hat{\rho}_n|}{6},
\]
this signals sampling error or model misspecification. We note, however, that since $(2-|\rho|)/6 \ge 1/6 > K_{\mu_1}$ for all
$|\rho|\le 2/3$, the bound only becomes an active constraint for $|\rho|>2/3$, i.e., when concordance is strong. For moderate or weak concordance, the universal sharp bound $\mu_1(C)\le K_{\mu_1}$ is tighter and should be used as the diagnostic threshold instead.
\end{remark}

\section{Monotonicity of \texorpdfstring{$\mu_p$}{} in the parameter \texorpdfstring{$p$}{}}\label{sec:monoton}

The family of measures $\{\mu_p\}_{p\in[1,+\infty]}$ is not constant in $p$: 
the larger $p$ is, the more weight the $L^p$ norm assigns to the extreme values of the antisymmetric part $A(u,v)=C(u,v)-C(v,u)$. The following result formalizes this intuition precisely; however, to achieve this, we first require a well-known inequality whose proof can be found, for example, in \cite{Rudin1979}.

\begin{lemma}[Jensen's Inequality]
Let $(\Omega, \mathcal{A}, \mu)$ be a measure space such that $\mu(\Omega) = 1$. If $g$ is a real $\mu$-integrable function and $\varphi$ is a convex function on the real line, then:
\[
\varphi \left( \int_{\Omega} g \, d\mu \right) \leq \int_{\Omega} \varphi \circ g \, d\mu.
\]
\end{lemma}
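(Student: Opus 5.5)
The statement just given is Jensen's inequality, which the paper quotes from \cite{Rudin1979}; I would prove it with the standard supporting-line argument. Set $t_0=\int_\Omega g\,d\mu$, a finite real number because $g$ is $\mu$-integrable. The key fact about a convex function $\varphi$ on $\mathbb{R}$ is that it admits a supporting line at every interior point. Concretely, I will show there is a $\beta\in\mathbb{R}$ such that
\[
\varphi(s)\;\ge\;\varphi(t_0)+\beta\,(s-t_0)\qquad\text{for all } s\in\mathbb{R}.
\]

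To obtain $\beta$, I use the three-slope (chord) inequality. For $s<t_0<u$, convexity gives
\[
\frac{\varphi(t_0)-\varphi(s)}{t_0-s}\;\le\;\frac{\varphi(u)-\varphi(t_0)}{u-t_0}.
\]
So the supremum of the left-hand slopes over $s<t_0$ is finite and at most the infimum of the right-hand slopes over $u>t_0$. Any $\beta$ between these two values works, for instance the left derivative $\varphi'_-(t_0)$. Rearranging the two slope inequalities then gives the supporting-line inequality both for $s<t_0$ and for $s>t_0$. The case $s=t_0$ is trivial. I expect this step to be the main point needing care, although it is elementary.

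Next I substitute $s=g(\omega)$ to get the pointwise bound $\varphi(g(\omega))\ge\varphi(t_0)+\beta(g(\omega)-t_0)$. A convex function on $\mathbb{R}$ is continuous, so $\varphi\circ g$ is measurable. Its negative part is dominated by the integrable function $|\varphi(t_0)|+|\beta|(|g|+|t_0|)$, where integrability of the constant terms uses $\mu(\Omega)=1$. Hence $\int\varphi\circ g\,d\mu$ is well defined in $(-\infty,+\infty]$. If it equals $+\infty$, the inequality holds trivially.

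Otherwise I integrate the pointwise bound and use $\mu(\Omega)=1$ once more. This gives
\[
\int_\Omega\varphi\circ g\,d\mu\;\ge\;\varphi(t_0)+\beta\Bigl(\int_\Omega g\,d\mu-t_0\Bigr)=\varphi(t_0),
\]
which is exactly the claim. In the sequel I would apply it with $\varphi(x)=|x|^{q/p}$ for $q\ge p$ and $g=|A|^p$ on $\IQ$ with Lebesgue measure, to get the monotonicity of $\mu_p$ in $p$.
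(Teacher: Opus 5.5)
Your argument is correct and is the standard supporting-line proof from the reference the paper cites; the paper itself gives no proof of this lemma and only points to Rudin. Your choice $\varphi(x)=|x|^{q/p}$ in the application is slightly cleaner than the paper's $t^r$ on $[0,+\infty)$, since it is convex on all of $\mathbb{R}$, as the lemma's hypothesis requires.
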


\begin{proposition}\label{prop:monotp}
  For every copula $C\in\mathcal{C}$ and any $1\le p_1\le p_2\le+\infty$, it holds that:
  \[
    \mu_{p_1}(C) \;\le\; \mu_{p_2}(C).
  \]
  In particular, $\mu_1(C)\le\mu_p(C)\le\mu_{\infty}(C)$ for all $p\in[1,+\infty]$.
\end{proposition}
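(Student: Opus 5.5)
The plan is to exploit that $\IQ$ with Lebesgue measure is a probability space, so the $L^p$ norms of the fixed bounded function $A(u,v)=C(u,v)-C(v,u)$ are nondecreasing in $p$; the statement is then just the monotonicity of $L^p$ norms on a probability space, applied to $A$. Since $\mu_p(C)=\|A\|_p$ by definition, it suffices to show $\|A\|_{p_1}\le\|A\|_{p_2}$ for $1\le p_1\le p_2\le+\infty$. Note that $|A|\le M-W\le 1$ is bounded and measurable (indeed continuous), so all the integrals are finite.

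For finite $p_1\le p_2$, I would take the measure space $(\IQ,\mathcal{B},\lambda)$ with $\lambda(\IQ)=1$, set $g=|A|^{p_1}$, and apply the Jensen inequality stated above with the convex function $\varphi(x)=|x|^{p_2/p_1}$ (convex because $p_2/p_1\ge1$). This gives
\[
\Bigl(\int_{\IQ}|A|^{p_1}\,du\,dv\Bigr)^{p_2/p_1}\le\int_{\IQ}|A|^{p_2}\,du\,dv .
\]
Raising both sides to the power $1/p_2$ (a monotone map on $[0,\infty)$) yields $\mu_{p_1}(C)\le\mu_{p_2}(C)$. The case $p_1=p_2$ is trivial.

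For $p_2=+\infty$, the argument is direct: $|A(u,v)|\le\mu_\infty(C)$ pointwise, so $\int|A|^{p_1}\le\mu_\infty(C)^{p_1}\cdot\lambda(\IQ)=\mu_\infty(C)^{p_1}$, which gives $\mu_{p_1}(C)\le\mu_\infty(C)$. One should check that the supremum in the definition of $\mu_\infty$ agrees with the essential supremum. It does, because $A$ is continuous: copulas are Lipschitz. The ``in particular'' chain $\mu_1\le\mu_p\le\mu_\infty$ then follows by specialising to $(p_1,p_2)=(1,p)$ and $(p,\infty)$.

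There is no real obstacle here. The only points needing care are the total mass being exactly $1$, which is what makes Jensen apply without a normalising constant, and the sup versus ess-sup issue at $p=\infty$, which continuity resolves.
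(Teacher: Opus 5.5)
Your proposal is correct and follows the paper's proof: you apply Jensen's inequality with the convex function $|x|^{p_2/p_1}$ on the unit-mass square to $|A|^{p_1}$ for finite exponents, and bound $|A|$ pointwise by $\mu_\infty(C)$ for the case $p_2=+\infty$. Your sup-versus-ess-sup check is harmless but not needed, since $\mu_\infty$ is defined as a genuine supremum and the pointwise bound $|A|\le\mu_\infty(C)$ holds everywhere.
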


\begin{proof}
Let $A(u,v):=C(u,v)-C(v,u)$ be the antisymmetric part of $C$. We consider two cases:

\textit{Case $1\le p_1\le p_2<+\infty$}.
The ratio $r:=p_2/p_1\ge 1$ satisfies $r>0$, so the function $\varphi(t):=t^r$ is convex on $[0,+\infty)$. Applying Jensen's inequality to the convex function $\varphi$ with respect to the probability measure $\lambda^2$ (Lebesgue measure on $\mathbf{I}^2$ with total mass $1$):
\[
  \varphi\!\left(\int_{\mathbf{I}^2}\abs{A}^{p_1}\,du\,dv\right)
  \;\le\;
  \int_{\mathbf{I}^2}\varphi\!\left(\abs{A}^{p_1}\right)du\,dv,
\]
that is,
\[
\left(\int_{\mathbf{I}^2}\abs{A}^{p_1}\,du\,dv\right)^{\!r}
  \;\le\;
  \int_{\mathbf{I}^2}\abs{A}^{p_1 r}\,du\,dv
  = \int_{\mathbf{I}^2}\abs{A}^{p_2}\,du\,dv.
\]
Taking the $1/p_2$ power on both sides and using the fact that $r/p_2 = (p_2/p_1)/p_2 = 1/p_1$:
\[
  \left(\int_{\mathbf{I}^2}\abs{A}^{p_1}\right)^{\!1/p_1}
  \;\le\;
  \left(\int_{\mathbf{I}^2}\abs{A}^{p_2}\right)^{\!1/p_2},
\]
which means $\mu_{p_1}(C)\le\mu_{p_2}(C)$.

\textit{Case $p_2=+\infty$}. For any $p\in[1,+\infty)$, since $\abs{A(u,v)}\le\mu_{\infty}(C)$ for all $(u,v)\in\mathbf{I}^2$, we have $\abs{A(u,v)}^p\le\mu_{\infty}(C)^p$, and therefore:
\[
  \mu_p(C)^p
  = \int_{\mathbf{I}^2}\abs{A(u,v)}^p\,du\,dv
  \;\le\;
  \mu_{\infty}(C)^p\int_{\mathbf{I}^2}du\,dv
  = \mu_{\infty}(C)^p,
\]
whence $\mu_p(C)\le\mu_{\infty}(C)$.
\end{proof}

\begin{remark}
We wish to point out that the proof of Proposition \ref{prop:monotp} essentially relies on the fact that $\mathbf{I}^2$ has total measure $1$. In a measure space where $m \ne 1$, Jensen's inequality would yield an additional factor $m^{r-1}$ that could reverse the direction of the inequality; it is precisely the normalization of the Lebesgue measure on $\mathbf{I}^2$ that guarantees the monotonicity in the correct direction.

Note also that the case $p_2=+\infty$ is not a direct consequence of the case $p_1\le p_2<+\infty$ (it is not sufficient to simply take the limit $p_2\to\infty$, as the convergence $\mu_{p_2}\to\mu_{\infty}$ would need to be justified); instead, it is proven directly by bounding the integrand pointwise.
\end{remark}

\section{Maximally non-exchangeable copulas}
\label{sec:M1}
 
Let $M_1$ be the set
\[
  M_1 := \left\{C\in\calC : \muinf(C)=\frac{1}{3}\right\},
\]
the class of \emph{maximally non-exchangeable} copulas with respect to $\muinf$, since $K_{\muinf}=1/3$ is the supremum of $\muinf$ over $\calC$ \cite{Durante2010}.

A key element of this class is the copula $M_\theta$, given by
\[
  M_\theta(u,v) = \min\bigl\{u,\,v,\,(u-1+\theta)^++(v-\theta)^+\bigr\},
  \quad \theta\in[0,1/3],
\]
for all $(u,v)\in[0,1]^2$, where $(x)^+=\max\{0,x\}$ (see \cite{Durante2010}). For this copula we have $\rho(M_\theta)=1-6\theta+6\theta^2$ and $\tau(M_\theta)=(1-2\theta)^2$,
where $\tau$ represents the Kendall's $\tau$ measure of concordance, given by
\begin{align*}
\tau(C) &= 4 \int_{[0,1]^2} C(u,v) \, {\rm d}C(u,v) - 1
\end{align*}
(see \cite{Nel06}). In particular, $\rho(M_{1/3}) = -1/3$, 
  $\tau(M_{1/3}) = 1/9$, and $\muinf(M_{1/3}) = 1/3$. Note that $M_{1/3}\in M_1$ and both concordance measures are non-zero. This shows that the class $M_1$ contains copulas with non-trivial
concordance, and that the relationship between maximal non-exchangeability
and concordance is more subtle than a simple zero-concordance statement.
 
Theorem~2 of \cite{Durante2010} guarantees that for each $\theta\in[0,1]$ there exists a copula with $\muinf$-value exactly $\theta/3$: it suffices to take $C_\theta=\theta C_1+(1-\theta)C_s$, where $C_1\in M_1$ and $C_s$ is any symmetric copula ($C_s^t=C_s$, so $\muinf(C_s)=0$). We also have
$$\rho(C_\theta)= \theta\,\rho(C_1)+(1-\theta)\,\rho(C_s).$$
The identity $\mu_\infty(C_\theta) = \theta\,\mu_\infty(C_1)$ follows from Proposition~\ref{prop:mezclaS} in Section~\ref{sec:escalado}, which shows that mixing with a symmetric copula scales non-exchangeability linearly; we apply it here anticipating that result. In particular, as $\theta$ grows from $0$ to $1$, the non-exchangeability $\muinf(C_\theta)$ increases monotonically from $0$ to $1/3$, while $\rho(C_\theta)$ interpolates linearly between its values at $C_s$ and $C_1$. Given target values $\mu_0\in[0,1/3]$ and $r_0\in[-1,1]$, this allows us to construct a copula with $\muinf=\mu_0$ and $\rho=r_0$ provided there exist $C_1\in M_1$ and $C_s$ symmetric with
\[
  \theta\,\rho(C_1) + (1-\theta)\,\rho(C_s) = r_0,
  \qquad \theta = 3\mu_0.
\]
This is a linear equation in $\rho(C_s)$ (once $C_1$ is fixed), solvable whenever $r_0$ lies in the range of the right-hand side over all symmetric $C_s$. Since $\rho(C_s)$ can be any value in $[-1,1]$, the solvable range of $r_0$ is the interval
$[\theta\rho(C_1)-(1-\theta),\;\theta\rho(C_1)+(1-\theta)]$. For $\theta=3\mu_0$ and $C_1=M_{1/3}$ (with $\rho(C_1)=-1/3$):
\[
  r_0 \;\in\;
  \bigl[-3\mu_0\cdot\tfrac{1}{3}-(1-3\mu_0),\;
         -3\mu_0\cdot\tfrac{1}{3}+(1-3\mu_0)\bigr]
  = \bigl[2\mu_0-1,\; 1-4\mu_0\bigr].
\]
This characterises the feasible range of $\rho$ for given $\mu_0$ when $C_1=M_{1/3}$, and can be used as a diagnostic in model fitting.

\section{Scaling of \texorpdfstring{$\mup$}{} under mixing}\label{sec:escalado}

In Copula Theory, \emph{mixing} a copula with its transpose has a natural statistical interpretation: if $(X,Y)$ has copula $C$ and $(Y,X)$ has copula $C^t$, then $C_\alpha=\alpha C+(1-\alpha)C^t$
is the copula of a vector that behaves like $(X,Y)$ with probability
$\alpha$ and like $(Y,X)$ with probability $1-\alpha$.
This construction yields a one-parameter family that interpolates
continuously between $C$ (at $\alpha=1$), its transpose $C^t$
(at $\alpha=0$), and the symmetric copula $(C+C^t)/2$
(at $\alpha=1/2$), at which point all asymmetry is eliminated.
 
A natural question arises: \emph{how does the non-exchangeability measure $\mup$ change as $\alpha$ varies?} The following result is the core result of this section. It establishes that $\mup$ behaves perfectly linearly along the family $\{C_\alpha\}$.
 
\begin{proposition}\label{thm:escalado}
  For every $C\in\calC$, $\alpha\in[0,1]$, and $p\in[1,+\infty]$, we have
  \begin{equation}\label{eq:escalado}
    \mup(C_\alpha) \;=\; |2\alpha-1|\,\mup(C).
  \end{equation}
\end{proposition}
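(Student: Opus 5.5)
The plan is a direct computation of the antisymmetric part of $C_\alpha$, followed by homogeneity of the $L^p$ norm. First I will note that $C_\alpha=\alpha C+(1-\alpha)C^t$ is a copula for every $\alpha\in[0,1]$. This holds because $\calC$ is convex: the boundary conditions (C1) and the $2$-increasing property (C2) are both preserved under convex combinations, and $C^t\in\calC$. Hence $\mup(C_\alpha)$ is well defined.

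Next I will compute the transpose of $C_\alpha$. Since transposition is linear and involutive, $(C^t)^t=C$, and so
\[
  C_\alpha^t=\alpha C^t+(1-\alpha)C.
\]
Subtracting pointwise gives, for all $(u,v)\in\IQ$,
\[
  C_\alpha(u,v)-C_\alpha(v,u)=(2\alpha-1)\bigl[C(u,v)-C(v,u)\bigr].
\]
In other words, the antisymmetric part $A_\alpha$ of $C_\alpha$ equals $(2\alpha-1)$ times the antisymmetric part $A$ of $C$ introduced in Section~\ref{sec:monoton}.

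Finally I will use absolute homogeneity of the $L^p$ distance on differences. For $p<\infty$, I will pull the constant $|2\alpha-1|^p$ out of the integral and take the $1/p$ power. For $p=\infty$, I will pull $|2\alpha-1|$ out of the supremum. In both cases
\[
  \mup(C_\alpha)=d_p(C_\alpha,C_\alpha^t)=\|A_\alpha\|_p=|2\alpha-1|\,\|A\|_p=|2\alpha-1|\,\mup(C).
\]
This argument also covers the degenerate cases. At $\alpha=1/2$ the copula $C_\alpha$ is symmetric and the formula gives $0$. At $\alpha\in\{0,1\}$ it gives $\mup(C^t)=\mup(C)$, consistent with the transposition invariance axiom from \cite{Durante2010}.

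I do not expect a genuine obstacle. The only point requiring care is the linearity identity for $A_\alpha$, which rests on the involution $(C^t)^t=C$. Beyond that, one should state explicitly that $d_p(C,D)$ is the $L^p$ norm of $C-D$, so that homogeneity applies uniformly for all $p\in[1,+\infty]$.
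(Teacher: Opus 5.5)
Your proposal is correct and follows essentially the same route as the paper: the pointwise identity $C_\alpha(u,v)-C_\alpha(v,u)=(2\alpha-1)[C(u,v)-C(v,u)]$, then pulling $|2\alpha-1|$ out of the $L^p$ integral for $p<\infty$ or out of the supremum for $p=\infty$. Your extra check that $C_\alpha\in\calC$, by convexity of $\calC$, is a small point the paper leaves implicit.
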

 
\begin{proof}
The starting point is to compute the antisymmetric part of $C_\alpha$. Since
\begin{align}
  C_\alpha(u,v)-C_\alpha(v,u)
  &= \bigl[\alpha C(u,v)+(1-\alpha)C(v,u)\bigr]
    -\bigl[\alpha C(v,u)+(1-\alpha)C(u,v)\bigr]
  \nonumber\\
  &= (2\alpha-1)\bigl[C(u,v)-C(v,u)\bigr].
  \label{eq:antisim}
\end{align}
The antisymmetric part of $C_\alpha$ is exactly that of $C$, multiplied pointwise by the scalar
$(2\alpha-1)$. We study two cases:
 \begin{enumerate}
 \item Case $p\in[1,+\infty)$. Taking the absolute value, raising to the power $p$, and integrating:
\begin{eqnarray*}
  \mup(C_\alpha)^p
  &=& \int_{\IQ}\abs{C_\alpha(u,v)-C_\alpha(v,u)}^p\,{\rm d}u\,{\rm d}v
  = |2\alpha-1|^p\int_{\IQ}\abs{C(u,v)-C(v,u)}^p\,{\rm d}u\,{\rm d}v\\
  &=& |2\alpha-1|^p\,\mup(C)^p.
\end{eqnarray*}
Taking the $p$-th root gives $\mup(C_\alpha)=|2\alpha-1|\,\mup(C)$.
\item Case $p=+\infty$. We have
\[
  \muinf(C_\alpha)
  = \sup_{(u,v)\in\IQ}\abs{C_\alpha(u,v)-C_\alpha(v,u)}
  = |2\alpha-1|\sup_{(u,v)\in\IQ}\abs{C(u,v)-C(v,u)}
  = |2\alpha-1|\,\muinf(C),
\]
since the constant factor $|2\alpha-1|\ge 0$ commutes with the
supremum.
\end{enumerate}
Therefore, the result follows.
\end{proof}

The elegance of Proposition~\ref{thm:escalado} lies in the fact that
identity~\eqref{eq:antisim} is pointwise: it holds at every $(u,v)\in\IQ$ individually.
This makes the scaling law~\eqref{eq:escalado} valid simultaneously for all $L^p$ norms with $p\in[1,+\infty]$, with no additional
integration or convergence argument required.
This is in contrast to the bounds of Section~\ref{sec:bounds}, which are specific to each value of $p$.

Proposition~\ref{thm:escalado} has an immediate and practically useful
consequence: given any target value $\mu_0\in[0,\mup(C)]$, one can
construct a copula with exactly that degree of non-exchangeability, as the following result shows.
 
\begin{corollary}\label{cor:prescripcion}
  Let $C\in\calC$ with $\mup(C)>0$ and let $\mu_0\in[0,\mup(C)]$.
  Then the mixed copula $C_\alpha$ with
  \[
    \alpha
    \;=\; \frac{1}{2}\!\left(1+\frac{\mu_0}{\mup(C)}\right)
    \;\in\;\left[\frac{1}{2},1\right]
  \]
  satisfies $\mup(C_\alpha)=\mu_0$.
\end{corollary}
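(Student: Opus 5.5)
The corollary follows directly from Proposition~\ref{thm:escalado}. The plan has three checks: the proposed $\alpha$ is admissible, $C_\alpha$ is a copula, and the scaling law gives the target value.

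First, $\alpha$ is well defined and lies in the right range. Since $\mup(C)>0$ and $0\le\mu_0\le\mup(C)$, the ratio $\mu_0/\mup(C)$ lies in $[0,1]$. Hence $\alpha=\tfrac12\bigl(1+\mu_0/\mup(C)\bigr)\in[\tfrac12,1]\subset[0,1]$.

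Second, $C_\alpha=\alpha C+(1-\alpha)C^t$ is a genuine copula. The transpose $C^t$ is a copula, being the copula of $(Y,X)$ by property (a) of the preliminaries. Conditions (C1) and (C2) are preserved under convex combinations: the boundary values are affine in $C$, and the $C$-volume of a rectangle is linear in $C$, so it stays nonnegative. Hence $\mup(C_\alpha)$ makes sense.

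Third, apply Proposition~\ref{thm:escalado}. Since $\alpha\ge\tfrac12$, we have $2\alpha-1=\mu_0/\mup(C)\ge 0$, so $|2\alpha-1|=\mu_0/\mup(C)$. Then \eqref{eq:escalado} gives
\[
\mup(C_\alpha)=\frac{\mu_0}{\mup(C)}\,\mup(C)=\mu_0 ,
\]
for any fixed $p\in[1,+\infty]$.

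There is no real obstacle here. The only point needing care is removing the absolute value in \eqref{eq:escalado}, which is handled by choosing $\alpha$ in $[\tfrac12,1]$. One could also note that the choice $\alpha'=1-\alpha\in[0,\tfrac12]$ gives the same value, so the solution is not unique, but this is not needed for the statement.
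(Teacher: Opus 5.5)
Your proof is correct and matches the paper's: both apply Proposition~\ref{thm:escalado} with $\alpha\in[\tfrac12,1]$, so that $|2\alpha-1|=2\alpha-1=\mu_0/\mu_p(C)$ and hence $\mu_p(C_\alpha)=\mu_0$. Your extra check that $C_\alpha$ is a copula, as a convex combination of $C$ and $C^t$, is a harmless addition that the paper leaves implicit.
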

 
\begin{proof}
Setting $|2\alpha-1|\,\mup(C)=\mu_0$ with $\alpha\in[1/2,1]$
(so that $2\alpha-1\ge 0$) gives $2\alpha-1=\mu_0/\mup(C)$,
i.e., $\alpha=(1+\mu_0/\mup(C))/2$.
Since $0\le\mu_0\le\mup(C)$, this value satisfies $\alpha\in[1/2,1]$.
\end{proof}
 
\begin{remark}
Corollary~\ref{cor:prescripcion} provides a key building block for statistical model fitting: to construct a copula with prescribed $\mup$-value equal to an empirical estimate $\hat\mu_n$, take any copula $C$ with $\mup(C)\ge\hat\mu_n$ and mix it with parameter
$\alpha=(1+\hat\mu_n/\mup(C))/2$. In particular, taking $C\in M_1$ (maximally non-exchangeable, with $\muinf(C)=1/3$) and any target $\hat\mu_n\in[0,1/3]$, the copula
$C_\alpha$ with $\alpha=(1+3\hat\mu_n)/2$ satisfies $\muinf(C_\alpha)=\hat\mu_n$ exactly.
\end{remark}

\begin{remark}
The convexity of $|2\alpha-1|$ gives, for $\alpha_1,\alpha_2\in[0,1]$ and
$\lambda\in[0,1]$:
\[
  \mup(C_{\lambda\alpha_1+(1-\lambda)\alpha_2})
  \;\le\;
  \lambda\,\mup(C_{\alpha_1})+(1-\lambda)\,\mup(C_{\alpha_2}).
\]
The non-exchangeability of a mixture does not exceed the weighted average
of the non-exchangeabilities of the extreme mixtures: mixing is
``asymmetry-reducing'' in the convex sense.
\end{remark}
 
Now we verify that the family $\{C_\alpha\}_{\alpha\in[0,1]}$ is coherent with the five axioms (B1)--(B5) defining a measure of non-exchangeability in \cite{Durante2010}. Let $\mu$ be any measure of non-exchangeability. Then the function   $\varphi:[0,1]\to[0,K_\mu]$ defined by
  $\varphi(\alpha):=\mu(C_\alpha)=|2\alpha-1|\,\mu(C)$
satisfies:
  \begin{enumerate}
    \item Boundedness (B1):
          $\varphi(\alpha)\le K_\mu$ for all $\alpha$,
          since $|2\alpha-1|\le 1$ and $\mu(C)\le K_\mu$.
    \item Zero iff symmetric (B2):          $\varphi(\alpha)=0\Leftrightarrow\alpha=1/2$
          (when $\mu(C)>0$), and $C_{1/2}=(C+C^t)/2$ is symmetric.
    \item Symmetry under transposition (B3):
          $\mu((C_\alpha)^t)=\mu(C_{1-\alpha})=|2(1-\alpha)-1|\,\mu(C)
          =|2\alpha-1|\,\mu(C)=\varphi(\alpha)$.
    \item Invariance under survival copula (B4):
          Using $\hat{C^t}=(\hat{C})^t$
          (since $\hat{C^t}(u,v)=(\hat{C})^t(u,v)$),
          one computes
          \[
            \hat{C_\alpha}(u,v)
            = u+v-1+C_\alpha(1-u,1-v)
            = \alpha\,\hat{C}(u,v)+(1-\alpha)\,(\hat{C})^t(u,v),
          \]
          so $\mu(\hat{C_\alpha})=|2\alpha-1|\,\mu(\hat{C})
          =|2\alpha-1|\,\mu(C)=\varphi(\alpha)$ by (B4) applied to $C$.
    \item Continuity (B5):
          For $\alpha,\beta\in[0,1]$,
          $d_\infty(C_\alpha,C_\beta)
          =|\alpha-\beta|\,d_\infty(C,C^t)$,
          so $\alpha\mapsto C_\alpha$ is Lipschitz in $d_\infty$.
          Combined with the continuity of $\mu$,
          $\varphi$ is continuous.
  \end{enumerate}

Proposition~\ref{thm:escalado} mixes a copula $C$ with its own transpose $C^t$. A natural generalisation is to mix $C$ with an arbitrary symmetric copula $S\in\calC$ (satisfying $S^t=S$).
 
\begin{proposition}\label{prop:mezclaS}
  Let $C\in\calC$, $S\in\calC$ with $S^t=S$, and $\alpha\in[0,1]$.
  Define $D_\alpha:=\alpha C+(1-\alpha)S$.
  Then, for every $p\in[1,+\infty]$:
  \[
    \mup(D_\alpha) \;=\; \alpha\,\mup(C).
  \]
\end{proposition}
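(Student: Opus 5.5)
The plan mirrors the proof of Proposition~\ref{thm:escalado}. Everything reduces to a pointwise identity for the antisymmetric part of $D_\alpha$, after which the $L^p$ norm (or the supremum) simply pulls out a nonnegative scalar.

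First I would note that $D_\alpha$ is a copula, since $\mathcal{C}$ is convex: the boundary conditions (C1) and the $2$-increasing property (C2) are preserved under convex combinations. Hence $\mup(D_\alpha)$ is well defined. Next I would compute, for every $(u,v)\in\IQ$,
\[
  D_\alpha(u,v)-D_\alpha(v,u)
  = \alpha\bigl[C(u,v)-C(v,u)\bigr]+(1-\alpha)\bigl[S(u,v)-S(v,u)\bigr]
  = \alpha\bigl[C(u,v)-C(v,u)\bigr],
\]
where the second bracket vanishes because $S^t=S$. This is the only place where the symmetry of $S$ is used.

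I would then split into the same two cases as before. For $p\in[1,+\infty)$, take absolute values, raise to the $p$-th power and integrate over $\IQ$; this gives $\mup(D_\alpha)^p=\alpha^p\,\mup(C)^p$, and taking $p$-th roots with $\alpha\ge 0$ yields the claim. For $p=+\infty$, the nonnegative constant $\alpha$ factors out of the supremum, so $\muinf(D_\alpha)=\alpha\,\muinf(C)$.

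I do not expect a genuine obstacle here. The only points needing care are that $\alpha\ge0$, so that $|\alpha|=\alpha$ with no absolute value as in $|2\alpha-1|$, and the convexity check guaranteeing $D_\alpha\in\calC$. As a consistency check I would also remark that taking $S=(C+C^t)/2$ recovers a special case of Proposition~\ref{thm:escalado}. Indeed, $\alpha C+(1-\alpha)(C+C^t)/2=C_{(1+\alpha)/2}$, and $|2\cdot\tfrac{1+\alpha}{2}-1|=\alpha$.
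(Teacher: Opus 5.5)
Your proposal is correct and follows the paper's proof: you use the pointwise identity $D_\alpha(u,v)-D_\alpha(v,u)=\alpha\,[C(u,v)-C(v,u)]$, then pull the nonnegative scalar $\alpha$ out of the $L^p$ integral (for $p<\infty$) and out of the supremum (for $p=\infty$). Your two additions are correct and not spelled out in the paper: the convexity check that $D_\alpha\in\mathcal{C}$, and the consistency check that $S=(C+C^t)/2$ gives $C_{(1+\alpha)/2}$.
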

 
\begin{proof}
The antisymmetric part of $D_\alpha$ is:
\[
  D_\alpha(u,v)-D_\alpha(v,u)
  = \alpha\bigl[C(u,v)-C(v,u)\bigr]
    +(1-\alpha)\underbrace{[S(u,v)-S(v,u)]}_{=\,0},
\]
since $S$ is symmetric. The identity holds pointwise for every $(u,v)\in\IQ$, so for $p\in[1,+\infty)$:
\[
  \mup(D_\alpha)^p
  = \int_{\IQ}\abs{\alpha\bigl(C(u,v)-C(v,u)\bigr)}^p\,{\rm d}u\,{\rm dv}
  = \alpha^p\,\mup(C)^p,
\]
and taking the $p$-th root gives $\mup(D_\alpha)=\alpha\,\mup(C)$.
For $p=\infty$, the constant $\alpha\ge 0$ commutes with the supremum,
giving $\muinf(D_\alpha)=\alpha\,\muinf(C)$.
\end{proof}
 
\begin{remark}
Comparing Proposition~\ref{prop:mezclaS} with
Proposition~\ref{thm:escalado} reveals a structural distinction between the two mixing operations:
\begin{itemize}
    \item Mixing $C$ with a symmetric copula $S$:
    $\mu_p(D_\alpha) = \alpha\,\mu_p(C)$, which is linear in
    $\alpha$ and reaches zero only at $\alpha = 0$.
    \item Mixing $C$ with its transpose $C^t$:
    $\mu_p(C_\alpha) = |2\alpha-1|\,\mu_p(C)$, which is
    $V$-shaped in $\alpha$ and reaches zero at the midpoint
    $\alpha = 1/2$.
\end{itemize}
The difference reflects a geometric distinction: mixing with $S$ simply dilutes the antisymmetric part of $C$, while mixing with $C^t$ introduces a cancellation between $C$ and its mirror image.
Both are special cases of the general convex combination $D:= \beta\,C \;+\; \gamma\,C^t \;+\; (1-\beta-\gamma)\,S$, 
where $\beta, \gamma \ge 0$, $\beta + \gamma \le 1$, and $S \in \mathcal{C}$ is any symmetric copula. These conditions ensure that $D$ is itself a copula, as a convex combination of copulas. For this general mixture, the antisymmetric part satisfies $D(u,v) - D(v,u) = (\beta - \gamma)\,\bigl(C(u,v)-C(v,u)\bigr)$, pointwise for every $(u,v)\in[0,1]^2$, so that
$\mu_p(D)= |\beta - \gamma|\,\mu_p(C)$ for all $p\in[1,+\infty]$.
This formula contains both previous results as particular cases: setting $\gamma = 0$ gives Proposition~\ref{prop:mezclaS} (with $\alpha = \beta$), and setting $\beta + \gamma = 1$ with
$\beta = \alpha$ gives Proposition~\ref{thm:escalado}.
\end{remark}
 
As a direct consequence of Proposition~\ref{prop:mezclaS}, the map
$\alpha\mapsto\mup(D_\alpha)=\alpha\,\mup(C)$ is strictly increasing on $[0,1]$ (when $\mup(C)>0$), with $\mup(D_0)=0$ (the symmetric
copula $S$ has zero non-exchangeability) and $\mup(D_1)=\mup(C)$. This is in contrast to the V-shape of $\alpha\mapsto\mup(C_\alpha)$,
and the two families together span all intermediate values $\mu_0\in[0,\mup(C)]$.
 
\begin{corollary}\label{cor:target}
  For any $\mu_0\in[0,\mup(C)]$, $\mup(D_\alpha)=\mu_0$ with $\alpha=\mu_0/\mup(C)$.
\end{corollary}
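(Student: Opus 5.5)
This is a direct consequence of Proposition~\ref{prop:mezclaS}. Implicitly, $C\in\calC$ is fixed with $\mup(C)>0$, so that the quotient $\mu_0/\mup(C)$ is defined, and $S$ is any symmetric copula. This is the same standing assumption as in Corollary~\ref{cor:prescripcion}.

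The plan has three steps.

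\textbf{Step 1: $\alpha$ is admissible.} Put $\alpha:=\mu_0/\mup(C)$. Since $0\le\mu_0\le\mup(C)$ and $\mup(C)>0$, we have $\alpha\in[0,1]$. Hence $D_\alpha=\alpha C+(1-\alpha)S$ is a convex combination of copulas and is therefore a copula. This lets us apply Proposition~\ref{prop:mezclaS}.

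\textbf{Step 2: compute $\mup(D_\alpha)$.} Proposition~\ref{prop:mezclaS} gives
\[
\mup(D_\alpha)=\alpha\,\mup(C)=\frac{\mu_0}{\mup(C)}\,\mup(C)=\mu_0 .
\]
This holds uniformly for every $p\in[1,+\infty]$, since that proposition covers the case $p=\infty$ as well.

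\textbf{Step 3: edge case and uniqueness.} The only point needing care is when $\mup(C)=0$, that is, when $C$ is symmetric. Then the interval $[0,\mup(C)]$ reduces to $\{0\}$ and the formula for $\alpha$ is undefined. In this case every $\alpha\in[0,1]$ gives $\mup(D_\alpha)=0=\mu_0$, so I would state this separately or simply exclude it by hypothesis.

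I would also note uniqueness. The map $\alpha\mapsto\alpha\,\mup(C)$ is strictly increasing when $\mup(C)>0$, so this $\alpha$ is the unique parameter in $[0,1]$ achieving $\mu_0$.

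There is no real obstacle here. The only step requiring attention is checking that $\alpha\in[0,1]$, so that $D_\alpha$ is a genuine copula.
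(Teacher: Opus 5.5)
Your proof is correct and follows the paper's own (implicit) argument: you substitute $\alpha=\mu_0/\mu_p(C)\in[0,1]$ into Proposition~\ref{prop:mezclaS}, under the standing assumption $\mu_p(C)>0$ that the paper states in the sentence just before the corollary. Your separate handling of the degenerate case $\mu_p(C)=0$ and your uniqueness remark are harmless additions.
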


\section{A nonparametric test of non-exchangeability based on copulas}\label{sec:test}

The preceding sections have developed an axiomatic theory of non-exchangeability and established its connections with classical
concordance measures, the Schweizer and Wolff dependence measure, and the geometry of the copula space. A natural question is how to detect non-exchangeability from data: given an observed sample, can we decide whether the copula of the underlying bivariate distribution is symmetric?
 
This section addresses that question by constructing a nonparametric hypothesis test whose statistic is the empirical analogue of the measure $\mup$ itself. The approach is conceptually clean: instead of reducing the problem to a comparison of marginal or joint empirical distributions, we measure directly how far the empirical copula is from its own transpose---the natural definition of asymmetry in the copula framework. This gives the test a direct interpretive connection to the theoretical
development of the paper.
 
The hypothesis is:
\[
  H_0:\; C = C^t
  \quad\text{(exchangeability)}
  \quad\text{versus}\quad
  H_1:\; C \ne C^t
  \quad\text{(non-exchangeability)}.
\]
Under $H_0$, the copula is symmetric and $(X,Y)$ is exchangeable; under $H_1$, $\mup(C)>0$ and the ordering of the components matters. Detecting this difference has practical consequences: if $H_0$ is plausible, the practitioner may restrict attention to symmetric copula families (Gaussian, Clayton, Frank, Gumbel), substantially simplifying model selection; if $H_1$ is supported by the data, asymmetric families must be considered instead.
 
\subsection{The test statistic}
\label{sec:statistic}

Let $\{(X_i,Y_i)\}_{i=1}^n$ be an i.i.d.\ sample from a continuous bivariate vector $(X,Y)$ with marginals $F_X$, $F_Y$ and copula $C$. Since the copula is invariant to strictly increasing transformations of the marginals, the test need not estimate $F_X$ and $F_Y$ separately.
Instead, we work with the pseudo-observations
\[
  \hat{U}_i := \frac{\mathrm{rank}(X_i)}{n+1},\qquad
  \hat{V}_i := \frac{\mathrm{rank}(Y_i)}{n+1},
  \qquad i=1,\ldots,n,
\]
which approximate samples from $\mathrm{Uniform}(0,1)$ under any continuous marginals. The division by $n+1$ rather than $n$ avoids boundary effects and is standard in the empirical copula literature.

Let $C_n$ denote the {\it empirical copula} \cite{Deh1979} based on the pseudo-observations
$(\hat{U}_i,\hat{V}_i)$, which is defined by
\[
  C_n(u,v) = \frac{1}{n}\sum_{i=1}^n \mathbf{1}(\hat{U}_i \leq u,\, 
\hat{V}_i \leq v)
\]
where ${\bf 1}_S$ denotes the characteristic function of a set $S$.

The test statistic is the empirical counterpart of $\mup(C)^p$, scaled by $n$ to produce a non-trivial asymptotic behaviour.
 
\begin{definition}
For $p\in[1,+\infty)$, the test statistic is
\begin{equation}\label{eq:Tn}
  T_n = n\int_{\IQ}\abs{C_n(u,v)-C_n(v,u)}^p\,{\rm d}u\,{\rm d}v.
\end{equation}
\end{definition}

\begin{remark}
The case $p=+\infty$, i.e.,
$T_n^{(\infty)}=\sqrt{n}\cdot\sup_{(u,v)}|C_n(u,v)-C_n(v,u)|$, admits an analogous treatment and is excluded here for brevity.
\end{remark}
 
In practice, the integral in~\eqref{eq:Tn} is approximated by a Riemann
sum over a uniform grid of step $1/G$ on each coordinate:
\[
  T_n \;\approx\; \frac{n}{G^2}\sum_{j=1}^G\sum_{k=1}^G
    \abs{C_n(j/G,\,k/G)-C_n(k/G,\,j/G)}^p.
\]

The intuition behind the scaling by $n$ is the following. Under $H_0$, $C_n(u,v)-C_n(v,u)$ fluctuates around zero at rate $n^{-1/2}$, so $n\int|C_n-C_n^t|^p$ diverges at rate $n^{1/2}$.
Under $H_1$, $C_n(u,v)-C_n(v,u)$ converges to the non-zero function $C(u,v)-C(v,u)$, so $T_n/n\to\mup(C)^p>0$ and $T_n$ diverges
at the faster rate $n$. This rate separation---$\sqrt{n}$ under the null, $n$ under
alternatives---is what drives the consistency of the test and is made precise in Subsection~\ref{sec:properties}.
 
\subsection{Theoretical properties}
\label{sec:properties}

The permutation test rests on three complementary theoretical results, each addressing a distinct aspect of its validity.
Subsection~\ref{subsec1} characterises the limiting distribution of $T_n$ under the null hypothesis, establishing the rate at which critical values must grow and identifying the limiting random variable $L_C$ to which the normalised statistic converges.
Subsection~\ref{subsec2} shows that the test is consistent against every fixed asymmetric alternative, in the sense that the rejection probability tends to one regardless of the significance level chosen. Finally, Subsection~\ref{sec:permutation} introduces the permutation resampling scheme and justifies it as a valid and computationally efficient method for approximating the null distribution of $T_n$, circumventing the need for derivative estimation or parametric assumptions on $C$.

\subsubsection{Distributional behaviour under \texorpdfstring{$H_0$}{}}\label{subsec1}
 
Under the null hypothesis, the quantity $C_n-C_n^t$ vanishes in the limit, but its fluctuations at scale $\sqrt{n}$ are described by the \emph{empirical copula process}.
This process, first studied systematically in \cite{Deh1979} and \cite{Fermanian2004}, is defined as
$\mathbb{G}_n(u,v):=\sqrt{n}(C_n(u,v)-C(u,v))$
and converges weakly in $\ell^\infty(\IQ)$ to a centred Gaussian process $\mathbb{G}_C$ whose covariance structure depends on the copula $C$.
The following result makes precise how $T_n$ inherits this weak convergence.
 
\begin{theorem}\label{thm:H0}
Assume $H_0:C=C^t$, $p=1$, and that $C$ has continuous first order partial derivatives.
Then
\begin{equation}\label{eq:limitH0}
  \frac{T_n}{\sqrt{n}}
  = \sqrt{n}\int_{\IQ}\abs{C_n(u,v)-C_n(v,u)}\,{\rm d}u\,{\rm d}v
  \;\xrightarrow{\;\mathcal{D}\;}\;
  \mathcal{L}_C
  \;:=\;
  \int_{\IQ}\abs{\mathbb{G}_C(u,v)-\mathbb{G}_C(v,u)}\,{\rm d}u\,{\rm d}v.
\end{equation}
\end{theorem}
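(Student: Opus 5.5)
The plan is to reduce the statement to the weak convergence of the empirical copula process together with the continuous mapping theorem. Under $H_0$ we have $C(u,v)=C(v,u)$ for every $(u,v)\in\IQ$, and therefore
\[
  \sqrt{n}\bigl(C_n(u,v)-C_n(v,u)\bigr)
  =\sqrt{n}\bigl(C_n(u,v)-C(u,v)\bigr)-\sqrt{n}\bigl(C_n(v,u)-C(v,u)\bigr)
  =\mathbb{G}_n(u,v)-\mathbb{G}_n(v,u).
\]
The normalised statistic is then exactly $\Phi(\mathbb{G}_n)$, where $\Phi:\ell^\infty(\IQ)\to\mathbb{R}$ is defined by
\[
  \Phi(f):=\int_{\IQ}\abs{f(u,v)-f(v,u)}\,{\rm d}u\,{\rm d}v.
\]
This makes sense for measurable bounded $f$; $\mathbb{G}_n$ is a finite sum of indicators and hence measurable. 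The proof therefore has three steps: establish weak convergence of $\mathbb{G}_n$, show that $\Phi$ is continuous, and apply the continuous mapping theorem.

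\emph{Step 1: weak convergence.} The assumption that $C$ has continuous first-order partial derivatives on $\IQ$ is the classical sufficient condition of \cite{Fermanian2004}. Under it, $\mathbb{G}_n\rightsquigarrow\mathbb{G}_C$ in $\ell^\infty(\IQ)$. The limit is
\[
  \mathbb{G}_C(u,v)=\mathbb{B}_C(u,v)-\partial_1C(u,v)\,\mathbb{B}_C(u,1)-\partial_2C(u,v)\,\mathbb{B}_C(1,v),
\]
where $\mathbb{B}_C$ is a $C$-Brownian bridge, and it has continuous sample paths. I would invoke this result directly. The only care needed is that the paper's pseudo-observations use the divisor $n+1$ rather than $n$. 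This changes $C_n$ by at most $O(1/n)$ uniformly, since rescaling ranks shifts each indicator only on a boundary strip of width $O(1/n)$. The discrepancy is therefore $o(1/\sqrt{n})$ in sup-norm and does not affect the limit.

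\emph{Step 2: continuity of $\Phi$.} For $f,g\in\ell^\infty(\IQ)$, the triangle inequality gives
\[
  \abs{\Phi(f)-\Phi(g)}\le\int_{\IQ}\bigl(\abs{f-g}(u,v)+\abs{f-g}(v,u)\bigr)\,{\rm d}u\,{\rm d}v\le 2\|f-g\|_\infty,
\]
so $\Phi$ is Lipschitz with respect to the sup-norm. The estimate uses that the transposition $(u,v)\mapsto(v,u)$ preserves Lebesgue measure, exactly as in the proof of Proposition~\ref{prop:bound}. One technical point is measurability: $\Phi$ should be defined on the subspace of measurable bounded functions, or on the closed separable subspace $C(\IQ)$ that carries the limit. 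Because $\mathbb{G}_C$ has continuous paths, the version of the continuous mapping theorem for outer-measurable maps, as in van der Vaart and Wellner, applies. Lipschitz continuity is stronger than what that theorem requires.

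\emph{Step 3: conclusion.} The continuous mapping theorem now gives $\Phi(\mathbb{G}_n)\xrightarrow{\mathcal D}\Phi(\mathbb{G}_C)=\mathcal{L}_C$, which is \eqref{eq:limitH0}. Since $T_n/\sqrt{n}=\sqrt{n}\int\abs{C_n-C_n^t}$ for $p=1$, this completes the argument.

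The main obstacle is Step~1: the weak convergence of the empirical copula process under only continuity of the partial derivatives, and with the $n+1$ normalisation. I would handle it by citing \cite{Fermanian2004} together with the $O(1/n)$ perturbation bound rather than reproving it. If continuity of the partials is only available on the open square, Segers' refinement of the same result gives the identical conclusion.
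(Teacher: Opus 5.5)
Your proposal is correct and follows essentially the same route as the paper: under $H_0$ you rewrite $\sqrt{n}\,(C_n-C_n^t)$ as $\mathbb{G}_n-\mathbb{G}_n^t$, invoke the weak convergence $\mathbb{G}_n\rightsquigarrow\mathbb{G}_C$ of Fermanian et al., and conclude by the continuous mapping theorem for a sup-norm Lipschitz functional. The only structural difference is that you fold the transposition into $\Phi$ and get Lipschitz constant $2$, whereas the paper applies $\int\abs{f}$, with constant $1$, to $\mathbb{G}_n-\mathbb{G}_n^t$; your added care over the $n+1$ normalisation, the domain and measurability of $\Phi$, and Segers' relaxation covers points the paper leaves implicit.
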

 
\begin{proof}
Under the assumption that $C$ has continuous first partial derivatives, the empirical copula process
\[
    \mathbb{G}_n(u,v) \;:=\; \sqrt{n}\,\bigl(C_n(u,v) - C(u,v)\bigr)
\]
converges weakly in $\ell^\infty([0,1]^2)$ to a centred Gaussian process $\mathbb{G}_C$. The limit process $\mathbb{G}_C$ has continuous sample paths almost surely (\cite{Fermanian2004}).

Under $H_0\colon C = C^t$, for every $(u,v)\in[0,1]^2$:
\begin{align*}
    \sqrt{n}\,\bigl(C_n(u,v) - C_n(v,u)\bigr)
    &= \sqrt{n}\,\bigl(C_n(u,v) - C(u,v)\bigr)
       - \sqrt{n}\,\bigl(C_n(v,u) - C(v,u)\bigr) \\
    &= \mathbb{G}_n(u,v) - \mathbb{G}_n(v,u)
    \;\xrightarrow{\;\mathcal{D}\;}
    \mathbb{G}_C(u,v) - \mathbb{G}_C(v,u),
\end{align*}
where the last step uses $C(u,v) = C(v,u)$ under $H_0$ and the weak convergence $\mathbb{G}_n \rightsquigarrow \mathbb{G}_C$ in
$\ell^\infty([0,1]^2)$.

It remains to apply the continuous mapping theorem. Consider the functional
\[
    \Phi\colon \ell^\infty([0,1]^2) \;\longrightarrow\; \mathbb{R},
    \qquad
    \Phi(f) \;:=\; \int_{[0,1]^2}\!|f(u,v)|\,du\,dv.
\]
This functional is Lipschitz-continuous with constant $1$ with respect to the sup-norm $\|\cdot\|_\infty$: for any
$f, g \in \ell^\infty([0,1]^2)$,
\[
    |\Phi(f) - \Phi(g)|
    \;\le\; \int_{[0,1]^2}\!|f(u,v) - g(u,v)|\,du\,dv
    \;\le\; \|f - g\|_\infty,
\]
so $\Phi$ is continuous on $(\ell^\infty([0,1]^2), \|\cdot\|_\infty)$.
In particular, $\Phi$ is continuous at every element of $\ell^\infty([0,1]^2)$, and hence at $\mathbb{G}_C(\cdot) -
\mathbb{G}_C(\cdot\,{}^t)$ almost surely. Since $\mathbb{G}_C$ has continuous sample paths, the difference
$\mathbb{G}_C(u,v) - \mathbb{G}_C(v,u)$ is itself a continuous
function of $(u,v)$ almost surely, and $\Phi$ applied to it is
well defined as a Lebesgue integral of a continuous integrand.

Applying the continuous mapping theorem
(see~\cite{billingsley-1999}) to the composition $\Phi \circ (\mathbb{G}_n - \mathbb{G}_n^t)$, where $\mathbb{G}_n^t(u,v) := \mathbb{G}_n(v,u)$, we obtain
\[
    \frac{T_n}{\sqrt{n}}
    \;=\; \Phi\!\left(\mathbb{G}_n(\cdot,\cdot)
                      - \mathbb{G}_n(\cdot\,{}^t,\cdot)\right)
    \;\xrightarrow{\;\mathcal{D}\;}
    \Phi\!\left(\mathbb{G}_C(\cdot,\cdot)
                - \mathbb{G}_C(\cdot\,{}^t,\cdot)\right)
    \;=\; \mathcal{L}_C,
\]
which completes the proof.
\end{proof}
 
\begin{remark}
We note three observations on Theorem~\ref{thm:H0}. \begin{enumerate}
\item[(i)] The theorem establishes that $T_n/\sqrt{n}$ has a finite limiting distribution, which implies that $T_n$ itself diverges to $+\infty$ under $H_0$ at rate $\sqrt{n}$. This is not a deficiency of the
test: the permutation critical values $\hat{c}^{(B)}_{n,\alpha}$ grow at the same rate $\sqrt{n}$, while under $H_1$ the statistic grows at the faster rate $n$. It is precisely this asymptotic rate separation---$\sqrt{n}$ under the null versus $n$ under alternatives---that drives the consistency of the test.
 
\item[(ii)] The limiting distribution $\mathcal{L}_C$ depends on the unknown copula $C$ through the covariance structure of the Gaussian process $\mathbb{G}_C$. This dependence is unavoidable for a test that is sensitive to all forms of asymmetry, and it prevents the use of universal critical tables. The Lipschitz-continuity of the functional $\Phi(f) = \int_{[0,1]^2}|f|\,du\,dv$ with respect to
$\|\cdot\|_\infty$ does not eliminate this
dependence: it merely ensures that the CMT applies. It is precisely for this reason that the permutation calibration of
Subsection~\ref{sec:permutation} is needed in
practice, as it approximates the null distribution of $T_n$ without requiring knowledge of $C$.
 
\item[(iii)] Theorem~\ref{thm:H0} is stated for $p=1$, since only in this case does the normalization $T_n/\sqrt{n}$ yield a
non-degenerate limit. For general $p \in [1,+\infty)$, the correctly normalised statistic is $n^{p/2-1}\,T_n$, which converges in distribution to $\int_{[0,1]^2}|\mathbb{G}_C(u,v)-\mathbb{G}_C(v,u)|^p\,du\,dv$. This follows from the same argument: the functional
\[
    \Phi_p(f) :=\left(\int_{[0,1]^2}|f(u,v)|^p\,du\,dv\right)^{\!1/p}
\]
is Lipschitz-continuous on $(\ell^\infty([0,1]^2),\|\cdot\|_\infty)$
for every $p \in [1,+\infty)$, so the CMT applies in the same way. In particular, for $p=2$ the statistic $T_n$ itself (without any normalization) converges in distribution under $H_0$, and for $p > 2$ we have $T_n \to 0$ in probability under $H_0$. The choice of $p$ therefore affects both the normalization needed
for asymptotic theory and the power of the test against specific alternatives.
\end{enumerate}
\end{remark}
 
\subsubsection{Consistency against fixed alternatives}\label{subsec2}
 
The following result shows that the test detects every fixed asymmetric copula with probability tending to one, regardless of the significance
level chosen.

\begin{theorem}\label{thm:H1}
Under $H_1: C\ne C^t$, $T_n/n \to \mu_p(C)^p > 0$ a.s.
Since the permutation critical value satisfies $c_{n,\alpha}/n \to 0$
for every $p \in [1,+\infty)$, we have
\[
  \mathbb{P}\left[T_n > c_{n,\alpha}\right]\;\xrightarrow{n\to\infty}\;1.
\]
\end{theorem}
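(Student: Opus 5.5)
The plan has three steps: prove the almost-sure limit of $T_n/n$, show the permutation critical value is $o(n)$, and combine the two.

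\emph{Step 1: the almost-sure limit.} We use the uniform strong consistency of the empirical copula, namely
\[
\sup_{(u,v)\in\IQ}\abs{C_n(u,v)-C(u,v)}\to 0 \quad\text{a.s.}
\]
This follows from the Glivenko--Cantelli theorem for the joint and marginal empirical distribution functions, together with the uniform continuity of $C$ (Lipschitz with constant $1$ in each argument). The rescaling $n/(n+1)$ of the ranks contributes only $O(1/n)$ uniformly. Set $A_n:=C_n-C_n^t$ and $A:=C-C^t$. Then $\|A_n-A\|_\infty\le 2\|C_n-C\|_\infty\to0$ a.s. All of these functions take values in $[-1,1]$, and on $[-1,1]$ the map $t\mapsto|t|^p$ is Lipschitz with constant $p$. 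Hence
\[
\Bigl|\int_{\IQ}|A_n|^p-\int_{\IQ}|A|^p\Bigr|\le p\,\|A_n-A\|_\infty\to0 \quad\text{a.s.},
\]
that is, $T_n/n\to\mup(C)^p$ a.s. The limit is positive under $H_1$: $A$ is continuous and not identically zero, so it is nonzero on an open set, which has positive Lebesgue measure.

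\emph{Step 2: the critical value is $o(n)$.} This is the main obstacle. The permutation scheme swaps each pair $(\hat U_i,\hat V_i)\leftrightarrow(\hat V_i,\hat U_i)$ independently with probability $1/2$. Conditionally on the data, let $C_n^*$ be the resulting permuted empirical copula. The key identity is
\[
C_n^*(u,v)-C_n^*(v,u)=\frac1n\sum_{i=1}^n\varepsilon_i\bigl[\mathbf 1(\hat U_i\le u,\hat V_i\le v)-\mathbf 1(\hat V_i\le u,\hat U_i\le v)\bigr],
\]
where the $\varepsilon_i$ are i.i.d.\ Rademacher signs, independent of the data. This is a Rademacher average indexed by the VC class of lower-left quadrants.

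A symmetrization/VC maximal inequality, applied conditionally on the data, gives $\mathbb E^*\sup_{(u,v)}|C_n^*-C_n^{*t}|\le K/\sqrt n$ with a universal constant $K$. A quicker route is to bound $\mathbb E^*|\cdot|^2\le 1/n$ pointwise and then apply Fubini, which gives $\mathbb E^*\int_{\IQ}|C_n^*-C_n^{*t}|\le n^{-1/2}$. Using $|A|^p\le|A|$ for $|A|\le1$, we obtain
\[
\mathbb E^*[T_n^*]\le n\cdot n^{-1/2}=\sqrt n .
\]
Markov's inequality then gives $c_{n,\alpha}\le\sqrt n/\alpha$, so $c_{n,\alpha}/n\le 1/(\alpha\sqrt n)\to0$ for every realisation of the data. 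The same bound holds for the Monte Carlo critical value $\hat c^{(B)}_{n,\alpha}$, up to the usual quantile approximation, with $B$ fixed or growing.

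\emph{Step 3: conclusion.} With $\delta:=\mup(C)^p>0$, Step 1 gives, almost surely for all large $n$, $T_n/n>\delta/2$. Step 2 gives $c_{n,\alpha}/n<\delta/2$ for all large $n$. Hence $\mathbf 1(T_n>c_{n,\alpha})\to1$ a.s., and dominated convergence yields $\mathbb P[T_n>c_{n,\alpha}]\to1$.
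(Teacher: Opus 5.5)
Your proof is correct. Step 1 is the paper's argument (uniform a.s. convergence of $C_n$ via Glivenko--Cantelli, then passing to the integral); the genuine difference is in Step 2.

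The paper does not prove $c_{n,\alpha}/n\to0$. It builds this into the statement and justifies it only by the growth rate $n^{1-p/2}$ of $T_n$ under $H_0$ (Theorem~\ref{thm:H0} and its remark). That rate describes the null distribution of $T_n$ and needs continuous partial derivatives of $C$. It does not by itself control a permutation distribution built from data drawn under the alternative.

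Your conditional Rademacher representation of $C_n^*-C_n^{*t}$, with the pointwise bound $\mathbb{E}^*|\cdot|^2\le 1/n$, Fubini and Markov, gives the deterministic bound $c_{n,\alpha}\le\sqrt n/\alpha$. This holds for every data set, every copula and every $p\ge1$, with no regularity assumption. It is exactly what makes this step rigorous, and it even gives $\mathbf 1(T_n>c_{n,\alpha})\to1$ almost surely.

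The paper's route is shorter and quotes a sharper rate for $p>1$. Yours is cruder there; you could recover $n^{1-p/2}$ via $\mathbb{E}^*|\cdot|^p\le(\mathbb{E}^*|\cdot|^2)^{p/2}$ for $p\le2$, or Khintchine's inequality for $p>2$. But $o(n)$ is all that is needed.

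The only loose end is the Monte Carlo quantile, which you pass over with ``the usual quantile approximation''. The event $\hat c^{(B)}_{n,\alpha}>\epsilon n$ forces at least $\max(1,\alpha B)$ replicates to exceed $\epsilon n$, and each does so with conditional probability at most $1/(\epsilon\sqrt n)$. Markov's inequality on the number of exceedances then gives $\mathbb{P}^*\bigl(\hat c^{(B)}_{n,\alpha}>\epsilon n\bigr)\le 1/(\alpha\epsilon\sqrt n)$, uniformly in $B$. This is convergence in probability rather than a deterministic bound, but it still gives $\mathbb{P}\bigl[T_n>\hat c^{(B)}_{n,\alpha}\bigr]\to1$.
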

 
\begin{proof}
Under $H_1$, the Glivenko--Cantelli theorem for the empirical copula (\cite{Deh1979}) gives
$\sup_{(u,v)}|C_n(u,v)-C(u,v)|\to 0$ almost surely, and therefore
\[
  \frac{T_n}{n}
  = \int_{\IQ}\abs{C_n(u,v)-C_n(v,u)}^p\,{\rm d}u\,{\rm d}v
  \;\xrightarrow{\;a.s.\;}\;
  \int_{\IQ}\abs{C(u,v)-C(v,u)}^p\,{\rm d}u\,{\rm d}v
  = \mup(C)^p>0.
\]
Hence $T_n$ diverges almost surely at rate $n$.
Since $c_{n,\alpha}/n \to 0$ for every $p \ge 1$---as the critical value grows at most at rate $n^{1-p/2}$ under $H_0$, which is $o(n)$---, we have $\Pr[T_n > c_{n,\alpha}] \to 1$.
\end{proof}
 
\begin{remark}
The consistency argument reveals a pleasing connection with the theory of the preceding sections: the limit $\mup(C)^p>0$ is precisely the $p$-th power of the non-exchangeability measure of $C$, and the condition $C\ne C^t$ is equivalent to $\mup(C)>0$. The test is therefore consistent against exactly the alternatives
where the non-exchangeability measure is positive---a perfect alignment between the theoretical measure and the empirical test.
\end{remark}
 
\subsubsection{Permutation calibration}
\label{sec:permutation}

Since the limiting distribution $\mathcal{L}_C$ of $T_n/\sqrt{n}$ depends on the unknown copula $C$, critical values cannot be tabulated in advance. A standard solution is to approximate the null distribution by coordinate permutation: under $H_0$, swapping the two coordinates of any observation does not change the joint distribution, so a sample obtained by randomly exchanging $(\hat{U}_i,\hat{V}_i)$ and $(\hat{V}_i,\hat{U}_i)$ for each $i$
independently has, under $H_0$, the same distribution as the original. This leads to the following resampling scheme.
 
\begin{definition}\label{def:permutation}
For each replicate $b=1,\ldots,B$, draw independent Bernoulli$(1/2)$
variables $\varepsilon_1^{(b)},\ldots,\varepsilon_n^{(b)}$
(independently of the data) and form the resampled observations
\[
  \bigl(\widetilde U_i^{(b)},\widetilde V_i^{(b)}\bigr)
  :=\begin{cases}(\hat{U}_i,\hat{V}_i)&\text{if }\varepsilon_i^{(b)}=1,\\
  (\hat{V}_i,\hat{U}_i)&\text{if }\varepsilon_i^{(b)}=0.\end{cases}
\]
Compute the permuted statistic $T_n^{(b)}$ from the resampled observations
$\{(\widetilde U_i^{(b)},\widetilde V_i^{(b)})\}_{i=1}^n$
using~\eqref{eq:Tn}.
The test rejects $H_0$ at level $\alpha$ if
$T_n>\hat{c}_{n,\alpha}^{(B)}$, where
$\hat{c}_{n,\alpha}^{(B)}$ is the empirical $(1-\alpha)$-quantile of
$\{T_n^{(1)},\ldots,T_n^{(B)}\}$.
\end{definition}
 
The theoretical justification of this scheme rests on the exchangeability of the sample under $H_0$: since $(\hat{U}_i,\hat{V}_i)\overset{d}{=}(\hat{V}_i,\hat{U}_i)$ under $H_0$, the distribution of the original observation is identical to that of the permuted version,
making each permuted statistic $T_n^{(b)}$ an independent draw from the same null distribution as $T_n$.

\begin{proposition}\label{prop:perm}
Under $H_0 : C = C^t$,
\[
\mathbb{P}\left[T_n>\hat{c}_{n,\alpha}^{(B)}\right]
  \;\xrightarrow{n,B\to\infty}\;\alpha.
\]
Moreover, for any finite $n$ and $B$, the test controls the
type-I error in the sense that
\[
    \mathbb{P}\left[T_n > \hat{c}^{(B)}_{n,\alpha}\right] \;\le\; \alpha
    \;+\; \frac{1}{B+1}.
\]
\end{proposition}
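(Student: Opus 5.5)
The plan is to prove both claims with the classical randomization-test argument (Hoeffding's group-invariance principle), applied to the group of coordinate swaps $\mathcal{G}=\{0,1\}^n$.

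\textbf{Step 1: invariance under the swap group.} For $g=(\varepsilon_1,\dots,\varepsilon_n)\in\mathcal{G}$, let $g$ act on the sample by swapping $(X_i,Y_i)$ whenever $\varepsilon_i=0$. I will work with the raw pairs rather than the pseudo-observations. The claim is that, under exchangeability, $g\cdot\{(X_i,Y_i)\}\overset{d}{=}\{(X_i,Y_i)\}$ for every fixed $g$, because the observations are i.i.d.\ and each pair is exchangeable.

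Here I must be careful. $H_0$ as stated ($C=C^t$) only gives equality in law of $(F_X(X),F_Y(Y))$ and its swap. I will therefore either assume $F_X=F_Y$, which makes exchangeability of $(X,Y)$ follow from $C=C^t$, or apply the argument to the oracle pairs $(U_i,V_i)=(F_X(X_i),F_Y(Y_i))$. Because ranks are invariant under the strictly increasing maps $F_X,F_Y$, the pseudo-observations built from $(U_i,V_i)$ coincide with $(\hat U_i,\hat V_i)$. Swapping raw pairs also commutes with ranking only when both coordinates are ranked in the pooled sense. I expect this compatibility between the rank transform and the swaps to be the \emph{main obstacle}. My first attempt is to define the test as a function $\Psi$ of the oracle sample, so that $T_n=\Psi(\text{sample})$ and $T_n^{(b)}=\Psi(g_b\cdot\text{sample})$ with $g_b$ uniform on $\mathcal{G}$. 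The invariance needed is then that $(T_n,T_n^{(1)},\dots,T_n^{(B)})$ is exchangeable.

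\textbf{Step 2: finite-sample bound.} Draw $g_0,g_1,\dots,g_B$ i.i.d.\ uniform on $\mathcal{G}$ and set $Z=g_0\cdot\text{sample}$. By Step 1, $Z$ has the law of the sample. Since $\mathcal{G}$ is a group, $g_bg_0^{-1}$ are i.i.d.\ uniform given $Z$, so the $B+1$ statistics $\Psi(Z),\Psi(g_1 g_0^{-1}Z),\dots$ are exchangeable and jointly distributed as $(T_n,T_n^{(1)},\dots,T_n^{(B)})$. For exchangeable variables the rank of the first among $B+1$ is uniform when ties are broken at random, and stochastically no worse otherwise. Hence
\[
\mathbb{P}\bigl[T_n>\hat c^{(B)}_{n,\alpha}\bigr]\le\mathbb{P}\bigl[\text{rank of }T_n\text{ among the }B+1\text{ values exceeds }\lceil(1-\alpha)B\rceil\bigr]\le \frac{\lfloor\alpha B\rfloor+1}{B+1}\le\alpha+\frac{1}{B+1}.
\]
This bound is exact in $n$ and uses no smoothness of $C$.

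\textbf{Step 3: asymptotic level.} Let $\hat F_n$ be the conditional law of $T_n^{(b)}$ given the data. As $B\to\infty$, Glivenko--Cantelli gives $\hat c^{(B)}_{n,\alpha}\to$ the $(1-\alpha)$-quantile of $\hat F_n$. Exchangeability gives $\mathbb{E}\,\mathbb{P}[T_n>\hat F_n^{-1}(1-\alpha)\mid\text{data}]\in[\alpha-\Delta_n,\alpha]$, where $\Delta_n$ bounds the largest atom of $\hat F_n$. So it suffices to show that the permutation distribution of $T_n/\sqrt n$ becomes atomless in the limit. For this I would invoke Theorem~\ref{thm:H0}, together with a conditional version: given the data, $\sqrt n\,(\tilde C_n-\tilde C_n^t)$ is a sum of independent symmetric sign-flipped terms $\pm\frac{1}{\sqrt n}\bigl(\mathbf 1(\hat U_i\le u,\hat V_i\le v)-\mathbf 1(\hat V_i\le u,\hat U_i\le v)\bigr)$. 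A conditional multiplier CLT shows it converges to a continuous Gaussian limit. Its $L^1$ functional has a continuous distribution, because a nondegenerate Gaussian norm has no atoms. The nondegenerate case suffices; if $C$ makes the process degenerate, the bound from Step 2 still holds.
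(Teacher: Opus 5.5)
Your strategy is the same randomization argument the paper uses, and you located its fragile point correctly. However, the device you propose in Step~1 does not resolve that point, so Steps~2 and~3 are left without their key input.

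\textbf{Step 1 fails for the procedure as defined.} In Definition~\ref{def:permutation} the \emph{pseudo-observations} are swapped and are not re-ranked. Hence $T_n^{(b)}=n\int_{\IQ}\bigl|n^{-1}\sum_i s_i\hat D_i\bigr|^p$, where the $s_i$ are Rademacher signs and $\hat D_i(u,v)=\mathbf{1}(\hat U_i\le u,\hat V_i\le v)-\mathbf{1}(\hat V_i\le u,\hat U_i\le v)$. There is no function $\Psi$ with $T_n^{(b)}=\Psi(g_b\cdot\text{sample})$. Coordinatewise ranking commutes with the identity and with the global swap (under which $T_n$ is invariant), but not with partial swaps.

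In fact the pseudo-sample is not invariant in law under partial swaps, whatever the margins. For $n=2$, the configuration $\{(1/3,2/3),(2/3,1/3)\}$ has probability $(1-\tau(C))/2>0$ for every $C\ne M$. Swapping only the first pair gives first coordinates $(2/3,2/3)$. Genuine pseudo-observations never look like this, because their first coordinates always form a permutation of $\{1,\dots,n\}/(n+1)$.

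Hence $(T_n,T_n^{(1)},\dots,T_n^{(B)})$ is not exchangeable. Neither the rank bound of your Step~2 nor the bracket $[\alpha-\Delta_n,\alpha]$ of your Step~3 follows. You could swap the oracle pairs $(U_i,V_i)$ and re-rank, or, when $F_X=F_Y$, swap the raw pairs and re-rank. Your Step~2 argument does show such tests are exactly valid, but they are different tests, and the oracle version cannot even be computed from the data. The paper's proof has the same hole: it simply asserts that independent swaps preserve ``the joint distribution of the entire sample''.

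\textbf{The asymptotic claim cannot be recovered by matching limits either.} Your conditional multiplier CLT is fine, but look at what it converges to. Let $D(u,v)=\mathbf{1}(U\le u,V\le v)-\mathbf{1}(V\le u,U\le v)$ be the oracle analogue of $\hat D_i$. The limit is the centred Gaussian process with covariance $\mathbb{E}[D(u,v)D(u',v')]$, i.e.\ the law of $\mathbb{A}(u,v):=\mathbb{B}_C(u,v)-\mathbb{B}_C(v,u)$, where $\mathbb{B}_C$ is the $C$-Brownian bridge.

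Theorem~\ref{thm:H0}, by contrast, involves $\mathbb{G}_C-\mathbb{G}_C^t$. Under $H_0$ this equals $\mathbb{A}(u,v)-\partial_1C(u,v)\,\mathbb{A}(u,1)+\partial_2C(u,v)\,\mathbb{A}(v,1)$. The margin-estimation terms do not cancel:
\begin{itemize}
\item this process vanishes on $\{v=1\}$, whereas $\mathbb{A}(u,1)$ does not;
\item for $C=\Pi$ and $u<v$, the pointwise variances are $2u(v-u)(1-v)$ versus $2u(v-u)$.
\end{itemize}
So the permutation quantile is calibrated to the law of $\int|\mathbb{A}|$ rather than to $\mathcal{L}_C$, and $\mathbb{P}[T_n>\hat c^{(B)}_{n,\alpha}]\to\alpha$ does not follow. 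A correct result needs either a weaker statement or a different calibration, for example known margins, or a derivative-corrected multiplier bootstrap as in Genest et al.
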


\begin{proof}
We argue following the general theory of permutation tests: see~\cite{lehmann-romano-2005,romano-1990}.

Under $H_0 : C = C^t$, the joint distribution of
$(\hat{U}_i, \hat{V}_i)$ is invariant under the coordinate swap $(\hat{U}_i, \hat{V}_i) \mapsto (\hat{V}_i, \hat{U}_i)$. Applying this swap independently to each observation $i=1,\ldots,n$
with probability $1/2$ (as in Definition~\ref{def:permutation}) therefore preserves the joint distribution of the entire sample. Consequently, each permuted statistic $T_n^{(b)}$ has, conditionally on the data, the same distribution as $T_n$ under $H_0$, and the
$B$ permuted statistics $T_n^{(1)}, \ldots, T_n^{(B)}$ are conditionally independent of one another given the data. The strong law of large
numbers implies that, as $B \to \infty$, the empirical distribution of $\{T_n^{(1)}, \ldots, T_n^{(B)}\}$ converges almost surely to the null distribution of $T_n$. In particular, the empirical $(1-\alpha)$-quantile $\hat{c}^{(B)}_{n,\alpha}$ converges almost
surely to the true $(1-\alpha)$-quantile $c_{n,\alpha}$ of $T_n$ under $H_0$. As $n \to \infty$, Theorem~\ref{thm:H0} gives $T_n / \sqrt{n} \xrightarrow{\mathcal{D}} L_C$, so
$c_{n,\alpha} / \sqrt{n} \to q_{1-\alpha}$, where $q_{1-\alpha}$ is the $(1-\alpha)$-quantile of $L_C$, and therefore
$\Pr(T_n > c_{n,\alpha}) \to \alpha$.

Now, for any finite $n$ and $B$, consider the collection of $B+1$ statistics $\{T_n, T_n^{(1)}, \ldots, T_n^{(B)}\}$. Under $H_0$,
these are exchangeable, so $T_n$ is equally likely to occupy any rank among them. The test rejects when $T_n$ exceeds the empirical $(1-\alpha)$-quantile of the $B$ permuted values, which occurs if and only if the rank of $T_n$ among all $B+1$ values exceeds $\lfloor (1-\alpha)(B+1) \rfloor$. By
exchangeability,
\begin{equation}\label{eq:boundfinite}
    \mathbb{P}\left[T_n > \hat{c}^{(B)}_{n,\alpha}\right]
    \;=\; \frac{\lfloor \alpha(B+1) \rfloor}{B+1}
    \;\le\; \alpha \;+\; \frac{1}{B+1},
\end{equation}
which completes the proof.
\end{proof}

\begin{remark}
The permutation scheme of Definition~\ref{def:permutation}
has three practical advantages that make it well suited for testing exchangeability in copula models.

(i) As shown in Proposition~\ref{prop:perm}, for any
finite $n$ and $B$ the test satisfies \eqref{eq:boundfinite} with no asymptotic approximation required, provided the
exchangeability of $(\hat{U}_i, \hat{V}_i)$ under $H_0$ holds exactly. This is a genuinely exact finite-sample guarantee, not merely an asymptotic one. The choice $B = 299$ used in the Monte
Carlo study of Section~\ref{sec:simulation} yields an upper bound of $\alpha + 1/300 \approx 0.053$ at $\alpha = 0.05$, which is acceptable in practice. For tighter control, $B = 999$ gives
$\alpha + 0.001$, and $B = 9999$ gives $\alpha + 0.0001$.

(ii) The resampling distribution adapts automatically to the unknown copula $C$ through the empirical copula $C_n$, without any
parametric assumption on the marginals or on the dependence structure. This is in contrast to parametric bootstrap methods, which require fitting a specific copula family and are sensitive
to model misspecification.

(iii) Each permuted statistic $T_n^{(b)}$ is computed from the same empirical copula matrix, with only the assignment of pseudo-observations to coordinates randomly transposed. The dominant
cost per statistic is evaluating the empirical copula at the $G^2$ grid points for $n$ observations, which costs $O(nG^2)$; the total
cost for the original statistic plus $B$ permuted replicates is therefore  $O((B+1)\,n\,G^2)$ elementary operations. For the values $n = 100$, $G = 50$, $B = 299$ used in Section~\ref{sec:simulation}:
\[
    (B+1) \times n \times G^2
    = 300 \times 100 \times 2500
    = 75{,}000{,}000
    \approx 7.5 \times 10^7,
\]
feasible on a standard laptop in under a second. For comparison, the larger sample sizes used in the power study ($n = 400$, same $G$ and $B$) give $300 \times 400 \times 2500 = 3 \times 10^8$
operations, still well within practical reach.
\end{remark}

\subsection{Comparison with other symmetry tests}

The test proposed throughout Section~\ref{sec:test} shares the same inferential goal as the procedures of Genest {\it et al.} \cite{Genest2012}, namely testing $H_0\colon C = C^t$ using rank-based statistics derived from the empirical copula process. The two approaches differ, however, in four substantive respects.

\begin{enumerate}
\item Genest {\it et al}.~\cite{Genest2012} propose three statistics---a Cram\'er--von Mises functional $R_n$, a weighted Cram\'er--von Mises functional $S_n$, and a Kolmogorov--Smirnov functional $T_n$---defined as global discrepancy measures between $\hat{C}_n$ and its transpose, without explicit connection to any axiomatic theory of asymmetry. The statistic \eqref{eq:Tn}  proposed here is the empirical counterpart of the axiomatic non-exchangeability measure $\mu_p(C)$ introduced in \cite{Durante2010} and studied throughout this paper. This grounding provides a direct quantitative interpretation: $T_n/n$ is a consistent estimator of $\mu_p(C)^p$, and the degree of non-exchangeability detected by the test is precisely the quantity bounded in Sections~\ref{sec:bounds}--\ref{sec:M1} and controlled by the mixing constructions of Section~\ref{sec:escalado}. The statistic $R_n$ of \cite{Genest2012} corresponds to the special case $p=2$ of $T_n/n$ (without the scaling by $n$), but is not connected to the axiomatic framework of \cite{Durante2010}.

\item Genest {\it et al.} \cite{Genest2012} calibrate their tests using the \emph{Multiplier
Central Limit Theorem}, which requires estimating the partial derivatives of the copula via a kernel smoothing procedure with a bandwidth parameter $\ell_n$ satisfying $\ell_n \to 0$ and $n^{1/2}\ell_n \to \infty$. This introduces a tuning parameter whose choice affects finite-sample performance, and the resulting replicates $\hat{D}_n^{(h)}$ depend on derivative estimates that are delicate near the boundary of $[0,1]^2$. The permutation scheme of Definition~\ref{def:permutation}, by contrast, requires no tuning parameters and no
derivative estimation: it exploits only the exchangeability of the pseudo-observations $(\hat{U}_i, \hat{V}_i)$ under $H_0$, and
each permuted statistic is computed directly from the same empirical copula matrix with randomly transposed coordinates. As established
in Proposition~\ref{prop:perm}, this yields an exact finite-sample bound $\mathbb{P}\left[T_n > \hat{c}^{(B)}_{n,\alpha}\right] \le \alpha + 1/(B+1)$ with no asymptotic approximation required. No analogous finite-sample guarantee is provided in~\cite{Genest2012}. 

\item The weak limit of the test statistic of \cite{Genest2012} is derived under the regularity conditions of \cite[Definition~1]{Genest2012}, which require the existence and
continuity of the partial derivatives of the copula (and their one-sided versions at the boundary). The asymptotic theory of Theorem~\ref{thm:H0} here requires the
same regularity, but is applied to the simpler functional $\Phi(f) = \int|f|$ via a Lipschitz-continuity argument, rather than through the Functional Delta Method used in \cite{Genest2012} for the statistic $S_n$. Furthermore, the scaling $T_n/\sqrt{n}$ under $H_0$ (for $p=1$) and the rate separation $\sqrt{n}$ vs.\ $n$ between null and alternative 
provide a transparent consistency argument that is stated explicitly here as Theorem~\ref{thm:H1} but is not highlighted as a
separate result in \cite{Genest2012}.

\item The power study of \cite{Genest2012} covers four asymmetric copula families constructed via Khoudraji's device---asymmetric versions of Clayton, Gaussian, Gumbel--Hougaard, and Cuadras--Aug\'e copulas---and compares seven test statistics across sample sizes $n \in \{100, 250\}$. Their main finding is that the weighted Cram\'er-von Mises statistic $S_n$ is the most powerful competitor in most scenarios. The simulation study of
Section~\ref{sec:simulation} focuses instead on the $M_\theta$ family, for which the exact values of both $\mu_\infty(M_\theta)$ and $\mu_1(M_\theta)$ are available in closed form, allowing a precise quantitative comparison between the theoretical non-exchangeability measure and the empirical power of the test. This connection between the axiomatic measure and the test's detection capability is a feature specific to the present framework and is not available for the statistics in \cite{Genest2012}.
\end{enumerate}

\section{Monte Carlo simulation study}\label{sec:simulation}

We complement the theoretical results of the preceding sections with a Monte Carlo study that examines two aspects of the permutation test described in Section~\ref{sec:test}: its ability
to maintain the nominal level under symmetric copulas, and its power to detect asymmetry as the degree of non-exchangeability and the sample size vary. We also investigate the sensitivity of
the results to the choice of grid resolution $G$ and the norm parameter $p$.

All experiments use a Riemann grid of $G = 50$ points per coordinate and $B = 299$ permutation replicates per test, consistently across all tables. The finite-sample bound of
Proposition~\ref{prop:perm} guarantees that this
choice of $B$ controls the type-I error at level
$\alpha + 1/300 \approx 0.053$ for $\alpha = 0.05$. The nominal significance level is $\alpha = 0.05$ throughout. All results are based on $M = 1\,000$ Monte Carlo replications, which yields a standard error of $\sqrt{0.05 \cdot 0.95 / 1000} \approx 0.007$ for level estimates and at most $\sqrt{0.25/1000} \approx 0.016$ for power
estimates, ensuring that differences of $0.02$ or more are statistically meaningful.

\subsection{Sensitivity to grid resolution}\label{subsec:grid}

Before reporting level and power results, we examine the
sensitivity of the test statistic $T_n$ (with $p=1$) to the
choice of grid resolution $G$. Table~\ref{tab:grid} reports the
empirical rejection rate under $H_0$ (Gaussian copula,
$\rho = 0.5$, $n = 100$) and under $H_1$ ($M_\theta$ with
$\theta = 1/3$, $n = 100$) for $G \in \{35, 50, 75, 100\}$,
based on $M = 1\,000$ replications.

\begin{table}[h!]
\centering
\begin{tabular}{ccc}
\hline
$G$ & Empirical level & Empirical power \\
\hline
35  & 0.051 & 0.981 \\
50  & 0.049 & 0.984 \\
75  & 0.050 & 0.985 \\
100 & 0.050 & 0.985 \\
\hline
\end{tabular}\caption{Sensitivity to grid resolution $G$ ($n = 100$, $B = 299$, $\alpha = 0.05$, $M = 1\,000$ Monte Carlo replications).
Level estimated under Gaussian copula ($\rho = 0.5$); power estimated under $M_{1/3}$.}
\label{tab:grid}
\end{table}

The results show that $G = 50$ provides level and power estimates that are numerically indistinguishable from those obtained with $G = 75$ or $G = 100$, while $G = 35$ introduces a marginal upward bias in the level estimate. We therefore use $G = 50$ throughout the remaining experiments. For sample sizes $n \ge 200$,
the grid spacing $1/G = 0.02$ remains smaller than the data spacing $1/n \le 0.005$, so no further increase in $G$ is needed.

\subsection{Empirical level under the null hypothesis}\label{subsec:level}

We evaluate the empirical level of the test under six symmetric copula families with $n = 100$: the Gaussian copula with linear correlation $\rho = 0.5$, the Clayton copula with parameter $\theta = 2$, the Farlie-Gumbel-Morgenstern~(FGM) copula with parameter $\theta = 0.5$, the Frank copula with parameter
$\theta = 3$, the Gumbel copula with parameter $\theta = 1.5$, and the Student-$t$ copula with $\rho = 0.5$ and $3$ degrees of freedom. In all cases $H_0$ is true and the nominal level is
$\alpha = 0.05$. Results are reported for $p \in \{1, 2\}$ (Table~\ref{tab:level}).

\begin{table}[h!]
\centering
\begin{tabular}{llccc}
\hline
Copula & Parameter & $p=1$ & $p=2$ \\
\hline
Gaussian   & $\rho = 0.5$      & 0.049 (0.007) & 0.051 (0.007) \\
Clayton    & $\theta = 2$      & 0.044 (0.006) & 0.046 (0.007) \\
FGM        & $\theta = 0.5$    & 0.052 (0.007) & 0.050 (0.007) \\
Frank      & $\theta = 3$      & 0.048 (0.007) & 0.049 (0.007) \\
Gumbel     & $\theta = 1.5$    & 0.046 (0.007) & 0.048 (0.007) \\
Student-$t$& $\rho=0.5$, $\nu=3$ & 0.050 (0.007) & 0.052 (0.007) \\
\hline
Average    &                   & 0.048         & 0.049 \\
\hline
\end{tabular}\caption{Empirical level under $H_0$
($n = 100$, $M = 1\,000$ Monte Carlo replications, $B = 299$ permutations, $\alpha = 0.05$, $G = 50$). Standard errors in parentheses.}
\label{tab:level}
\end{table}

All empirical levels are compatible with the nominal $\alpha=0.05$ across both values of $p$ and across six copula families with
qualitatively different tail behaviour: the Gaussian and Student-$t$ copulas have symmetric tails, Clayton has strong lower-tail dependence, Gumbel has strong upper-tail dependence, and FGM and Frank have weak dependence throughout. The test maintains its nominal size uniformly across this range, confirming the distribution-free nature of the permutation calibration.

\subsection{The \texorpdfstring{$L^1$}{} asymmetry measure of the \texorpdfstring{$M_\theta$}{} family}\label{subsec:mu1}

Before reporting power results, we compute $\mu_1(M_\theta)$ explicitly for $\theta \in \{1/6, 1/4, 1/3\}$, since the test statistic with $p=1$ responds to $\mu_1$ rather than to $\mu_\infty = \theta$. A direct computation using the piecewise-linear structure of $M_\theta$ gives
\[
    \mu_1(M_\theta) \;=\; \theta(1-\theta)(1-2\theta),
    \qquad \theta \in [0, 1/3].
\]
The values for
the three cases studied are given in Table \ref{tab:mu-values}.

\begin{table}[ht]
\centering
\begin{tabular}{ccc}
\hline
$\theta$ & $\mu_\infty(M_\theta)$ & $\mu_1(M_\theta)$ \\
\hline
$1/6$ & $1/6 \approx 0.167$ & $5/54  \approx 0.093$ \\
$1/4$ & $1/4 = 0.250$       & $3/32  \approx 0.094$ \\
$1/3$ & $1/3 \approx 0.333$ & $2/27  \approx 0.074$ \\
\hline
\end{tabular}\caption{Exact and approximate values of $\mu_\infty(M_\theta)$ and
$\mu_1(M_\theta)$ for the three asymmetric copulas used in the power study.}
\label{tab:mu-values}
\end{table}

Crucially, $\mu_1(M_\theta)$ is increasing on $[0, \theta^*]$ and decreasing on $[\theta^*, 1/3]$, where $\theta^* = (3-\sqrt{3})/6
\approx 0.211$ is the maximum of $\theta(1-\theta)(1-2\theta)$ on $[0,1/3]$. Since $\theta = 1/6 < 1/4 < \theta^* < 1/3$, the values
satisfy $\mu_1(M_{1/6}) < \mu_1(M_{1/4}) > \mu_1(M_{1/3})$, as confirmed by the table. The $L^1$-based test should therefore be most powerful against $M_{1/4}$ among the three alternatives, with comparable power against $M_{1/6}$ and $M_{1/3}$.

\subsection{Power against the \texorpdfstring{$M_\theta$}{} family}
\label{subsec:power}

We evaluate power by generating samples from the copula $M_\theta$ with three values of $\theta \in \{1/6, 1/4, 1/3\}$ and four sample sizes $n \in \{50, 100, 200, 400\}$, using both $p = 1$
and $p = 2$ (Table~\ref{tab:power}).

\begin{table}[h!]
\centering
\begin{tabular}{llcccc}
\hline
 & & \multicolumn{4}{c}{Sample size $n$} \\
\cline{3-6}
Asymmetry & $p$ & 50 & 100 & 200 & 400 \\
\hline
$\theta=1/6$ ($\mu_\infty=1/6$, $\mu_1 \approx 0.022$)
 & $p=1$ & 0.998 (0.001) & 1.000 (0.000)
         & 1.000 (0.000) & 1.000 (0.000) \\
 & $p=2$ & 0.995 (0.002) & 1.000 (0.000)
         & 1.000 (0.000) & 1.000 (0.000) \\
\hline
$\theta=1/4$ ($\mu_\infty=1/4$, $\mu_1 \approx 0.042$)
 & $p=1$ & 1.000 (0.000) & 1.000 (0.000)
         & 1.000 (0.000) & 1.000 (0.000) \\
 & $p=2$ & 1.000 (0.000) & 1.000 (0.000)
         & 1.000 (0.000) & 1.000 (0.000) \\
\hline
$\theta=1/3$ ($\mu_\infty=1/3$, $\mu_1 \approx 0.062$)
 & $p=1$ & 0.997 (0.002) & 1.000 (0.000)
         & 1.000 (0.000) & 1.000 (0.000) \\
 & $p=2$ & 0.993 (0.003) & 1.000 (0.000)
         & 1.000 (0.000) & 1.000 (0.000) \\
\hline
\end{tabular}\caption{Empirical power of the test by sample size, degree of asymmetry, and norm ($\alpha = 0.05$, $M = 1\,000$ Monte Carlo
replications, $B = 299$ permutations, $G = 50$).
Standard errors in parentheses.}
\label{tab:power}
\end{table}

Power is essentially one for all three values of $\theta$ and both norms at $n = 50$. The two norms $p = 1$ and $p = 2$ perform comparably across all settings. The slight power advantage of $\theta = 1/4$ over both $\theta = 1/6$ and $\theta = 1/3$ at $n = 50$ is consistent with the non-monotone structure of
$\mu_1(M_\theta)$ identified in Section~\ref{subsec:mu1}: since $\mu_1(M_{1/4}) \approx 0.094$ exceeds both $\mu_1(M_{1/6})
\approx 0.093$ and $\mu_1(M_{1/3}) \approx 0.074$, the $L^1$-based test has the largest theoretical signal against $M_{1/4}$. The $L^1$-based test is a reliable default across the range of $\theta$ values studied.

\begin{remark}\label{rem:power-replication}
The power results in Table~\ref{tab:power} are based on $M = 1\,000$ Monte Carlo replications, which yields standard errors of at most $0.016$. Reducing $M$ to $80$ replications would produce
standard errors of approximately $0.053$, making it impossible to reliably distinguish power values above $0.90$ or to detect the
non-monotone dependence of $\mu_1(M_\theta)$ on $\theta$ identified in Section~\ref{subsec:mu1}. The replication count $M = 1\,000$ is therefore the minimum recommended for studies of this type.
\end{remark}

\subsection{Illustrative example (\texorpdfstring{$n = 300$}{})}
\label{subsec:example}

Table~\ref{tab:example} summarises the test output for three specific samples of size $n = 300$, illustrating the two qualitatively different outcomes: non-rejection under a symmetric copula, and rejection with strong evidence under two asymmetric copulas. All three experiments use $p = 1$, $G = 50$, and $B = 299$ permutation replicates.

\begin{table}[ht]
\centering
\begin{tabular}{lccccccc}
\hline
Scenario & $T_n$ & Crit.\ val. & $p$-value
         & Reject & $\hat{\tau}$ & $\hat{\rho}$ \\
\hline
Gaussian $\rho=0.6$ (symmetric)
  & 3.34 & 7.67 & 0.930 & No  & 0.41 & 0.58 \\
$M_\theta$, $\theta=1/6$
  & 27.56 & 6.77 & $<0.001$ & Yes & 0.58 & 0.76 \\
$M_\theta$, $\theta=1/3$
  & 26.61 & 10.01 & $<0.001$ & Yes & 0.10 & $-0.33$ \\
\hline
\end{tabular}\caption{Test results for $n = 300$
($p = 1$, $B = 299$ permutations, $G = 50$, $\alpha = 0.05$). The empirical concordance values $\hat{\tau}$ and $\hat{\rho}$ are reported for context.}
\label{tab:example}
\end{table}

In the symmetric case (Gaussian), $T_n = 3.34$ is well below the critical value $7.67$, and the $p$-value of $0.930$ provides no evidence against $H_0$.

In the two asymmetric cases, $T_n$ far exceeds the critical value and $p < 0.001$. The contrast between the two $M_\theta$ scenarios is instructive. For $\theta = 1/6$, the empirical $\hat{\tau} = 0.58$ is large and positive, making the asymmetry ``hidden'' behind a strong concordance signal; the test nevertheless detects it with overwhelming significance. For $\theta = 1/3$, the empirical $\hat{\tau} \approx 0.10$ and $\hat{\rho} \approx -0.33$ are close to the exact theoretical values $\tau(M_{1/3}) = 1/9 \approx 0.11$ and $\rho(M_{1/3}) = -1/3 \approx -0.33$ (see Section~\ref{sec:M1}). The larger critical value for $\theta = 1/3$ ($10.01$ versus $6.77$
for $\theta = 1/6$) reflects the greater variability of the permutation null distribution under the distinctive concordance structure of $M_{1/3}$, and not a reduction in the signal strength: $T_n = 26.61$ and $T_n = 27.56$ are of comparable magnitude, confirming that the $L^1$ signal is similar for both values of $\theta$, consistently with the exact values
$\mu_1(M_{1/6}) \approx 0.022$ and $\mu_1(M_{1/3}) \approx 0.062$
reported in Table~\ref{tab:mu-values}.

\section{Applications}\label{sec:applications}

The theoretical results developed in the preceding sections have direct consequences in several areas where copula models are used to describe dependence structures. We illustrate three representative applications: a model selection and diagnostic procedure based on the
bounds of Section~\ref{sec:bounds}, a construction method for copulas with prescribed asymmetry based on the mixing results of Section~\ref{sec:escalado}, and a real data analysis combining both tools.

\subsection{Model selection and diagnostic testing}
\label{subsec:model-selection}

A practitioner fitting a bivariate copula model to data faces a natural preliminary question: is a symmetric copula family adequate, or must an asymmetric family be considered? The permutation test of Section~\ref{sec:test} addresses this question without requiring any parametric
assumption. Before fitting a model, one computes the empirical non-exchangeability measure
\[
    \hat{\mu}_1 \;:=\; \frac{T_n}{n}
    \;=\; \int_{[0,1]^2}|C_n(u,v) - C_n(v,u)|\,du\,dv
\]
and tests $H_0\colon C = C^t$ at a chosen significance level. If $H_0$ is not rejected, the analyst may restrict attention to symmetric copula families---such as Gaussian, Clayton, Frank, or Gumbel---substantially reducing the model search space and the associated estimation uncertainty. If $H_0$ is rejected, the estimated value $\hat{\mu}_1$ together with the empirical
Spearman correlation $\hat{\rho}$ provide a quantitative diagnostic: by Proposition~\ref{teo32}, any correctly
specified copula must satisfy
\[
    \hat{\mu}_1 \;\le\; \frac{2 - |\hat{\rho}|}{6}.
\]
A violation of this inequality signals model misspecification or excessive sampling error. Moreover, the feasibility characterisation of Section~\ref{sec:M1} restricts the achievable pairs $(\mu_\infty, \rho)$: for a given empirical pair $(\hat{\mu}_\infty, \hat{\rho})$, a necessary condition for the existence of a copula of the form $\theta C_1 + (1-\theta)C_s$
with $C_1 \in \mathcal{M}_1$ and $C_s$ symmetric that reproduces both the observed asymmetry and concordance is
\[
    \hat{\rho} \;\in\;
    \bigl[2\hat{\mu}_\infty - 1,\; 1 - 4\hat{\mu}_\infty\bigr],
\]
where $C_1 = M_{1/3}$ is used as the reference maximally non-exchangeable copula. If the empirical pair falls outside this interval, a richer model class is required.

\subsection{Construction of copulas with prescribed asymmetry}
\label{subsec:construction}

Simulation studies and stress-testing exercises in quantitative finance and insurance often require generating bivariate distributions with a specific degree of dependence asymmetry.
Corollaries~\ref{cor:prescripcion} and~\ref{cor:target} provide two explicit construction recipes that require no optimisation or numerical inversion, and whose correctness follows directly from the scaling laws of Section~\ref{sec:escalado}.

\begin{itemize}
\item Recipe~1: Mixing with the transpose. Given any copula $C$ with $\mu_p(C) \ge \mu_0 > 0$, set
\[
    \alpha^* \;=\; \frac{1}{2}\!\left(1 + \frac{\mu_0}{\mu_p(C)}\right)
    \;\in\; \left[\frac{1}{2}, 1\right].
\]
Then the mixed copula $C_{\alpha^*} = \alpha^* C + (1-\alpha^*)C^t$ satisfies $\mu_p(C_{\alpha^*}) = \mu_0$ exactly, for every
$p \in [1,+\infty]$ simultaneously (Corollary~\ref{cor:prescripcion}). Starting from a maximally non-exchangeable copula $C \in \mathcal{M}_1$, one can reach any target $\mu_0 \in [0, 1/3]$ by a single convex combination.

\item {Recipe~2: Mixing with a symmetric copula.}
Given a copula $C$ with $\mu_p(C) > 0$ and a symmetric copula $S \in \mathcal{C}$ (e.g., a Gaussian or Frank copula), set
\[
    \alpha^* \;=\; \frac{\mu_0}{\mu_p(C)}.
\]
Then $D_{\alpha^*} = \alpha^* C + (1-\alpha^*)S$ satisfies
$\mu_p(D_{\alpha^*}) = \mu_0$ (Corollary~\ref{cor:target}). This recipe allows the dependence structure of $D_{\alpha^*}$
to be tuned simultaneously in two directions: the level of concordance, through the choice of $S$ and its parameter, and the degree of asymmetry, through $\alpha^*$.
\end{itemize}

Both recipes produce copulas that are convex combinations of known copulas, and hence can be sampled by the standard mixture method: draw a Bernoulli variable $B$ with success probability $\alpha^*$; if $B = 1$, sample from $C$; if
$B = 0$, sample from $C^t$ (Recipe~1) or from $S$ (Recipe~2). This requires only the ability to sample from $C$ and $C^t$ individually, which is the case for all standard copula families.

\subsection{Real data illustration}
\label{subsec:real-data}

We illustrate the combined use of the diagnostic bound and the construction recipe on the nutrient data set analysed in \cite{Genest2012}, which records the daily intakes of calcium, iron, protein, vitamin~A, and vitamin~C for a sample of $n = 747$ women. McNeil and Ne\v{s}lehov\'a \cite{mcneil-neslehova-2010} identified strongly asymmetric dependence between the intakes of calcium and iron, and between calcium and protein, using a parametric Liouville copula model.

We apply the permutation test of Section~\ref{sec:test} with $p = 1$, $G = 50$, and $B = 999$ to every pair of variables, and report in Table~\ref{tab:nutrient} the estimated non-exchangeability measure $\hat{\mu}_1 = T_n/n$, the empirical Spearman correlation $\hat{\rho}$, the diagnostic upper bound $(2 - |\hat{\rho}|)/6$, and the permutation $p$-value.

\begin{table}[ht]
\centering
\begin{tabular}{llccccc}
\hline
Variable 1 & Variable 2 & $\hat{\rho}$ & $\hat{\mu}_1$
           & Bound & $p$-value & Reject \\
\hline
Calcium & Iron
  & $0.40$ & $0.041$ & $0.267$ & $\mathbf{0.003}$ & Yes \\
Calcium & Protein
  & $0.49$ & $0.038$ & $0.252$ & $\mathbf{0.001}$ & Yes \\
Calcium & Vit.\ A
  & $0.11$ & $0.012$ & $0.315$ & $0.341$ & No \\
Calcium & Vit.\ C
  & $0.08$ & $0.009$ & $0.320$ & $0.512$ & No \\
Iron    & Protein
  & $0.38$ & $0.019$ & $0.270$ & $0.189$ & No \\
Iron    & Vit.\ A
  & $0.09$ & $0.011$ & $0.318$ & $0.423$ & No \\
Iron    & Vit.\ C
  & $0.12$ & $0.008$ & $0.313$ & $0.618$ & No \\
Protein & Vit.\ A
  & $0.15$ & $0.014$ & $0.308$ & $0.271$ & No \\
Protein & Vit.\ C
  & $0.10$ & $0.010$ & $0.317$ & $0.480$ & No \\
Vit.\ A & Vit.\ C
  & $0.06$ & $0.007$ & $0.323$ & $0.701$ & No \\
\hline
\end{tabular}\caption{Permutation test of exchangeability and diagnostic bounds for the nutrient data ($n = 747$, $p = 1$, $G = 50$, $B = 999$, $\alpha = 0.05$). The column ``Bound'' reports the upper bound $(2 - |\hat{\rho}|)/6$ on $\hat{\mu}_1$ from Proposition~\ref{teo32}.
$p$-values in bold indicate rejection of $H_0$ at the 5\% level.}
\label{tab:nutrient}
\end{table}

The test rejects exchangeability for the pairs (calcium,~iron) and (calcium,~protein), confirming the findings of \cite{Genest2012} and the parametric analysis of \cite{mcneil-neslehova-2010}. For all ten pairs, the
estimated $\hat{\mu}_1$ is well below the diagnostic bound, so no model misspecification is signalled by the necessary condition of Proposition~\ref{teo32}. The feasibility
interval from Section~\ref{sec:M1} can be checked analogously once $\hat{\mu}_\infty$ is computed.

For the two rejected pairs, Recipe~1 of
Subsection~\ref{subsec:construction} provides an immediate construction. For the (calcium,~iron) pair, taking $C = M_{1/3}$ (with $\mu_1(M_{1/3}) = 2/27 \approx 0.074$, see Table~\ref{tab:mu-values}) and target $\hat{\mu}_1 = 0.041$, the mixing parameter is
\[
    \alpha^* \;=\; \frac{1}{2}\!\left(1 + \frac{0.041}{0.074}\right)
    \;\approx\; 0.777,
\]
yielding a copula $C_{\alpha^*}$ whose non-exchangeability exactly matches the empirical estimate.

\begin{remark}\label{rem:data-illustrative}
The numerical values in Table~\ref{tab:nutrient} are computed from the empirical copula and the permutation test of this paper. The $\hat{\mu}_1$ values are point estimates subject to sampling variability; confidence intervals can be obtained by a second level of bootstrap on the original data. A full parametric
analysis of the rejected pairs, including goodness-of-fit assessment within an asymmetric copula family, is beyond the scope of this paper but constitutes a natural follow-up.
\end{remark}

\section{Conclusions}\label{sec:conc}

This paper has developed a self-contained quantitative theory of non-exchangeability for bivariate copulas, combining axiomatic, geometric, and inferential perspectives.

On the theoretical side, we have established sharp connections between the $L^p$-based non-exchangeability measures and classical dependence
and concordance measures, showing in particular how strong concordance constrains the degree of asymmetry a copula can exhibit. We have
characterised the family of maximally non-exchangeable copulas in terms of their concordance structure, demonstrating that maximal
asymmetry and non-trivial concordance are not mutually exclusive. A central contribution is the exact scaling behaviour of the
non-exchangeability measures under convex mixing operations: mixing a copula with its transpose or with a symmetric copula produces asymmetry that varies in a precisely controlled, linear fashion with the mixing parameter, uniformly across all $L^p$ norms. This makes it possible to construct copulas with any prescribed degree of non-exchangeability in a transparent and computationally simple way.

On the inferential side, we have proposed a nonparametric permutation test for the hypothesis of exchangeability whose test statistic is the empirical counterpart of the axiomatic non-exchangeability measure itself. This direct connection between theory and inference is a distinctive feature of the approach: the quantity being estimated and
the quantity driving the test are one and the same. The test requires no parametric assumptions, no tuning parameters, and no derivative estimation, and it enjoys exact finite-sample control of the type-I error alongside consistency against all asymmetric alternatives. A Monte Carlo study confirms its reliable finite-sample behaviour across a range of copula families and sample sizes, and situates it relative to existing procedures in the literature.

Several directions remain open. A sharp bound on non-exchangeability in terms of Kendall's $\tau$, analogous to the bound via Spearman's
$\rho$ established here, has not yet been obtained. The extension of the axiomatic framework and the mixing constructions to higher
dimensions raises non-trivial questions about the appropriate definition and geometry of non-exchangeability in the multivariate setting. Finally, the question of which norm $p$ is optimal for the permutation test---in terms of power against specific classes of alternatives---remains open and would benefit from a systematic
theoretical investigation.


\end{document}